\definecolor{Myblue}{rgb}{0,0,0.6}  
\newcommand{\raisemath}[1]{\mathpalette{\raisem@th{#1}}}
\newcommand{\raisem@th}[3]{\raisebox{#1}{$#2#3$}}
\newcommand{\B}{\mathcal{B}}
\newcommand{\Bfd}{\mathcal{B}^{\textrm{fd}}}
\newcommand{\C}{\mathds{C}}
\newcommand{\Q}{\mathds{Q}}
\newcommand{\R}{\mathds{R}}
\newcommand{\Z}{\mathds{Z}}
\def\1{\ifmmode\mathrm{1\!l}\else\mbox{\(\mathrm{1\!l}\)}\fi}
\newcommand{\be}{\begin{equation}}
  \newcommand{\ee}{\end{equation}}
\newcommand{\bes}{\begin{equation*}}
  \newcommand{\ees}{\end{equation*}}
\newcommand{\id}{\text{id}}
\newcommand{\Hom}{\operatorname{Hom}}
\newcommand{\End}{\operatorname{End}}
\def\LG{\mathcal{LG}}
\def\LGgr{\mathcal{LG}^{\mathrm{gr}}}
\def\LGhuge{\mathcal{LG}^{\mathrm{GR}}}
\newcommand{\idlg}{\textrm{Id}_{\LG}}
\newcommand{\hmf}{\operatorname{hmf}}
\newcommand{\ev}{\operatorname{ev}}
\newcommand{\tev}{\widetilde{\operatorname{ev}}}
\newcommand{\coev}{\operatorname{coev}}
\newcommand{\tcoev}{\widetilde{\operatorname{coev}}}
\def\lra{\longrightarrow}
\def\lmt{\longmapsto}
\DeclareMathOperator{\Jac}{Jac}
\DeclareMathOperator{\Res}{Res}
\newcommand{\Bord}{\operatorname{Bord}}
\newcommand{\Bordfr}{\Bord_{2,1,0}^{\textrm{fr}}}
\newcommand{\Bordor}{\Bord_{2,1,0}^{\textrm{or}}}
\newcommand{\zzwfr}{\zz_W^{\textrm{fr}}}
\newcommand{\zzwor}{\zz_W^{\textrm{or}}}
\newcommand{\sotwo}{\textrm{SO}(2)}
\newcommand{\kbso}{\mathscr{K}(\Bfd)^{\sotwo}}
\newcommand{\zz}{\mathcal{Z}}
\newcommand{\Vect}{\operatorname{Vect}}
\newcommand{\Vectk}{\operatorname{Vect}_\Bbbk}
\newcommand{\sta}{\boxempty}
\newcommand{\dX}{{}^\dagger\hspace{-1.8pt}X}
\newcommand\arxiv[2]      {\href{https://arXiv.org/abs/#1}{#2}}
\newcommand\doi[2]        {\href{https://dx.doi.org/#1}{#2}}
\theoremstyle{definition} 
\newtheorem{definition}{Definition}
\newtheorem{proposition}[definition]{Proposition}
\newtheorem{theorem}[definition]{Theorem}
\newtheorem{lemma}[definition]{Lemma}
\newtheorem{corollary}[definition]{Corollary}
\newtheorem{remark}[definition]{Remark}
\newtheorem{example}[definition]{Example}
\numberwithin{equation}{section}
\numberwithin{definition}{section}
\numberwithin{figure}{section}
\newcommand\void[1]{}
\begin{document}

\title{%
Extending Landau-Ginzburg models to the point%
}

\author{%
  Nils Carqueville \quad
  Flavio Montiel Montoya%
  \\[0.5cm]
 \normalsize{\texttt{\href{mailto:nils.carqueville@univie.ac.at}{nils.carqueville@univie.ac.at}}} \\  %
   \normalsize{\texttt{\href{mailto:montielmof98@univie.ac.at}{montielmof98@univie.ac.at}}}\\[0.1cm]
  \hspace{-1.2cm} {\normalsize\slshape Fakult\"at f\"ur Mathematik, Universit\"at Wien, Austria}\\[-0.1cm]
}

\date{}
\maketitle

\begin{abstract}
We classify framed and oriented 2-1-0-extended TQFTs with values in the bicategories of Landau-Ginzburg models, whose objects and 1-morphisms are isolated singularities and (either $\Z_2$- or $(\Z_2 \times \Q)$-graded) matrix factorisations, respectively. 
For this we present the relevant symmetric monoidal structures and find that every object $W\in\Bbbk[x_1,\dots,x_n]$ determines a framed extended TQFT. 
We then compute the Serre automorphisms~$S_W$ to show that~$W$ determines an oriented extended TQFT if the associated category of matrix factorisations is $(n-2)$-Calabi-Yau. 

The extended TQFTs we construct from~$W$ assign the non-separable Jacobi algebra of~$W$ to a circle. 
This illustrates how non-separable algebras can appear in 2-1-0-extended TQFTs, and more generally that the question of extendability depends on the choice of target category. 
As another application, we show how the construction of the extended TQFT based on $W=x^{N+1}$ given by Khovanov and Rozansky can be derived directly from the cobordism hypothesis. 
\end{abstract}

\newpage

\tableofcontents

\section{Introduction}

Fully extended topological quantum field theory is simultaneously an attempt to capture the quantum field theoretic notion of locality in a simplified rigorous setting, and a source of functorial topological invariants. 
In dimension~$n$, such TQFTs have been formalised as symmetric monoidal $(\infty,n)$-functors from certain categories of bordisms with extra geometric structure to some symmetric monoidal $(\infty,n)$-category~$\mathcal C$. 

The fact that such functors must respect structure and relations among bordisms of all dimensions from~0 to~$n$ is highly restrictive. 
Specifically, the cobordism hypothesis of \cite{BDpaper} as formalised in \cite{l0905.0465, AyalaFrancis2017CH} states that (in the case of bordisms with framings) a TQFT is already determined by what it assigns to the point, and that fully extended TQFTs with values in~$\mathcal{C}$ are equivalent to fully dualisable objects in~$\mathcal C$. 
This is a strong finiteness condition. 
Similar relations hold for bordisms with other types of tangential structures; for example, fully extended TQFTs on oriented bordisms are argued to be described by homotopy fixed points of an induced $\text{SO}(n)$-action on fully dualisable objects in~$\mathcal{C}$. 

\medskip 

In the present paper we are concerned with fully extended TQFTs in dimension $n=2$. 
Following \cite{spthesis, Pstragowski} we take an \textsl{extended framed} (or \textsl{oriented}) \textsl{2-dimensional TQFT with values in a symmetric monoidal bicategory~$\B$} (where~$\B$ is called the \textsl{target}) to be a symmetric monoidal 2-functor 
\be 
\zz \colon \Bord_{2,1,0}^\sigma \lra \B 
\ee 
where $\sigma = \textrm{fr}$ (or $\sigma = \textrm{or}$), without any mention of $\infty$-categories. 
The bicategories $\Bord_{2,1,0}^\sigma$ of points, 1-manifolds with boundary and 2-manifolds with corners (all with structure~$\sigma$) are constructed in detail in \cite{spthesis, Pstragowski}. 
Moreover, these authors prove versions of the cobordism hypothesis (as we briefly review in Section~\ref{sec:extendedTQFTs}), and the relevant $\sotwo$-homotopy fixed points were described in \cite{HSV, HV, Hesse}. 

The example for the target~$\B$ that is dominant in the literature is the bicategory $\text{Alg}_\Bbbk$ (or one of its variants, cf.\ \cite[App.\,A]{bdspv1509.06811}) of finite-dimensional $\Bbbk$-algebras, finite-dimensional bimodules and bimodule maps, where~$\Bbbk$ is some field. 
Using the cobordism hypothesis one finds that extended framed TQFTs with values in $\text{Alg}_\Bbbk$ are classified by finite-dimensional separable $\Bbbk$-algebras \cite{l0905.0465, spthesis}, while in the oriented case the classification is in terms of separable symmetric Frobenius $\Bbbk$-algebras \cite{HSV}. 

On the other hand, non-separable algebras arise prominently in (non-extended) TQFTs. 
Recall e.\,g.\ from \cite{Kockbook} that such TQFTs $\zz_{\textrm{ne}} \colon \Bord_{2,1}^{\textrm{or}} \to \mathcal V$ are equivalent to commutative Frobenius algebras in~$\mathcal V$, where~$\mathcal V$ is a symmetric monoidal 1-category. 
Important examples are the categories of vector spaces, possibly with a $\Z_2$- or $\Z$-grading. 
In $\mathcal V = \Vectk^{\Z_2}$ or $\mathcal V = \Vectk^{\Z}$, Dolbeault cohomologies of Calabi-Yau manifolds serve as examples of non-separable commutative Frobenius algebras (describing B-twisted sigma models). 
Another class of examples of generically non-separable Frobenius algebras (in $\Vectk$) are the Jacobi algebras $\Bbbk[x_1,\dots,x_n]/(\partial_{x_1}W, \dots, \partial_{x_n}W)$ of isolated singularities described by polynomials~$W$. 
The associated TQFTs are Landau-Ginzburg models with potential~$W$. 

Hence we are confronted with the following question: How do sigma models and Landau-Ginzburg models (and other non-extended TQFTs with non-separable Frobenius algebras) relate to fully extended TQFTs? 

\medskip 

A non-extended 2-dimensional TQFT~$\zz_{\textrm{ne}} \colon \Bord_{2,1}^\sigma \to \mathcal V$ can be \textsl{extended to the point} if there is a symmetric monoidal bicategory~$\B$ and an extended TQFT $\zz \colon \Bord_{2,1,0}^\sigma \to \B$ such that (with $\mathds{I}_\B \in \B$ the unit object, and $\emptyset = \mathds{I}_{\Bord_{2,1,0}^\sigma}$)
\be 
\mathcal V \cong \End_\B(\mathds{I}_\B) 
\quad \text{ and } \quad 
\zz_{\textrm{ne}} \cong \zz\Big|_{\End_{\Bord_{2,1,0}^\sigma}(\emptyset)} \, . 
\ee  
Clearly an extension, if it exists, is not unique, as it depends on the target~$\B$. 

We expect that the extendability of the known classes of non-separable TQFTs is captured by the following motto:
\begin{itemize}[leftmargin=0.25cm, rightmargin=0.25cm]
\item[] 
\textsl{%
``If a non-extended 2-dimensional TQFT $\zz_{\operatorname{ne}}$ is a restriction of an appropriate defect TQFT $\zz_{\operatorname{ne}}^{\operatorname{def}}$, then $\zz_{\operatorname{ne}}$ can be extended to the point (at least as a framed theory), with the bicategory $\B_{\zz_{\operatorname{ne}}^{\operatorname{def}}}$ associated to $\zz_{\operatorname{ne}}^{\operatorname{def}}$ as target.''%
}
\end{itemize}

Let us unpack this statement and give concrete meaning to it. 
A 2-dimensional defect TQFT is a symmetric monoidal functor $\zz_{\textrm{ne}}^{\textrm{def}}$ on a category of stratified and decorated oriented 2-bordisms, see \cite{dkr1107.0495, CRS1} or the review \cite{TQFTlecturenotes}. 
Restricting $\zz_{\textrm{ne}}^{\textrm{def}}$ to only trivially stratified bordisms (meaning that there are no 1- or 0-strata) which all carry the same decoration, one obtains a non-extended closed TQFT. 
As shown in \cite{dkr1107.0495, TQFTlecturenotes} one can construct a pivotal 2-category $\B_{\zz_{\textrm{ne}}^{\textrm{def}}}$ from any defect TQFT $\zz_{\textrm{ne}}^{\textrm{def}}$ (along the same lines as one constructs commutative Frobenius algebras from closed TQFTs). 
In the case of state sum models the 2-category is equivalent to the full subbicategory  $\textrm{ssFrob}_{\Bbbk} \subset \textrm{Alg}_{\Bbbk}$ of separable symmetric Frobenius algebras \cite{dkr1107.0495}, and indeed $\End_{\textrm{ssFrob}_{\Bbbk}}(\Bbbk) \cong \Vectk$ where~$\Bbbk$ is the unit object. 
For A- and B-twisted sigmal models, the bicategories are expected to be that of symplectic manifolds and Lagrangian correspondences \cite{ww0708.2851} and of Calabi-Yau varieties and Fourier-Mukai kernels \cite{cw1007.2679}, respectively; in both cases the point serves as the unit object and its endomorphism category is equivalent to $\Vect_\C^{\Z}$. 
And in the case of Landau-Ginzburg models it should be the bicategory $\LG$ (or its $\Q$-graded version $\LGgr$) of isolated singularties and matrix factorisations \cite{cm1208.1481}. 
These are the ``appropriate'' bicategories we have in mind -- if they admit a symmetric monoidal structure (as expected). 

We stress that defect TQFT here only serves as a motivation to consider the bicategories above, and we will not mention defects again. 
A key point is that by choosing bicategories other than $\textrm{Alg}_\Bbbk$ as targets for extended TQFTs~$\zz$, one can associate non-separable $\Bbbk$-algebras to~$\zz$, namely what~$\zz$ assigns to the circle and the pair-of-pants. 

\medskip 

In the present paper we make the above precise for Landau-Ginzburg models. 
In Section~\ref{sec:LGmodels} we review the bicategories $\LG$ and $\LGgr$, and we present symmetric monoidal structures for them which on objects reduce to the sum of polynomials; the unit object is the zero polynomial, and its endomorphism categories are equivalent to $\Vectk^{\Z_2}$ and $\Vectk^\Z$, respectively. 
Moreover, we prove that every object in both $\LG$ and $\LGgr$ is fully dualisable (Corollaries~\ref{cor:LGfullydualisable} and~\ref{cor:LGgrfullydual}). 
Careful and lengthy checks that the data we supply satisfy the coherence axioms of symmetric monoidal bicategories are performed in the PhD thesis \cite{FlavioThesis} for the case $\LG$, and we explain how they carry over to $\LGgr$. 

It follows immediately from the cobordism hypothesis that every object in $\LG$ or $\LGgr$ determines an extended framed TQFT (with values in $\LG$ or $\LGgr$), while generically Landau-Ginzburg models cannot be extended to the point with target $\textrm{Alg}_\Bbbk$. 
Hence our results may be the first explicit demonstration of the general principle that the question of whether or not a given non-extended TQFT can be extended depends on the choice of the target for the extended theory. 

\medskip 

To settle the question of extendability also in the oriented case, we use the results of \cite{HSV, HV, Hesse}: a fully dualisable object~$W$ determines an extended oriented TQFT if and only if the Serre automorphism $S_W \colon W \to W$ (see~\eqref{eq:SerreAutom}) is isomorphic to the unit 1-morphism~$I_W$. 

In Section~\ref{subsec:LGoriented}, we show that for a potential $W \in \Bbbk[x_1,\dots,x_n]$ viewed as an object in $\LG$ we have $S_W \cong I_W[n]$ where $[n]$ is the $n$-fold shift functor which satisfies $[2] = [0]$, cf.\ Section~\ref{subsec:LGdef}. 
Since $I_W \ncong I_W[1]$ this implies that~$W$ determines an extended oriented TQFT (cf.\ Proposition~\ref{prop:LGexori})
\be 
\zzwor \colon \Bordor \lra \LG 
\ee 
if and only if~$n$ is even, and we discuss the relation to Serre functors and Calabi-Yau categories in Remark~\ref{rem:dCY}. 

For a quasi-homogeneous potential $W \in \Bbbk[x_1,\dots,x_n]$ viewed as an object in $\LGgr$ we find that $S_W \cong I_W[n-2]\{ \tfrac{1}{3} c(W) \}$, where $c(W)$ is the central charge of~$W$ (see~\eqref{eq:centralcharge}) and $\{-\}$ denotes the shift in $\Q$-degree. 
Hence every potential~$W$ that satisfies the condition $I_W \cong I_W[n-2]\{ \tfrac{1}{3} c(W)\}$ determines an extended oriented TQFT (cf.\ Proposition~\ref{prop:LGgrexori})
\be 
\zz_{W,\textrm{gr}}^{\textrm{or}} \colon \Bordor \lra \LGgr \, . 
\ee 
If the hypersurface $\{ W = 0 \}$ in weighted projective space is a Calabi-Yau variety (equivalently: if $\tfrac{1}{3} c(W) = n-2$) then the trivialisability of~$S_W$ reduces to the $(n-2)$-Calabi-Yau condition $\Sigma^{n-2} \cong \text{Id}$ on the shift functor $\Sigma = [1]\{1\}$ of the triangulated category $\LGgr(0,W)$, as we show in Corollary~\ref{cor:WCY}. 
This is in line with the general discussion in \cite[Sect.\,4.2]{l0905.0465}. 

Finally, we illustrate the combined power of the cobordism hypothesis and the explicit control over the bicategories $\LG$ and $\LGgr$ by computing the actions of our extended TQFTs on various 2-bordisms: the saddle, the cap, the cup, and the pair-of-pants. 
This is done in terms of the explicit adjunction maps of \cite{cm1208.1481}, for which we discuss two applications: 
\begin{itemize}[itemsep=2pt]
\item 
We explain (in Theorems~\ref{thm:JacW1} and~\ref{thm:JacW2}, Remarks~\ref{rem:gradedJacW1} and~\ref{rem:gradedJacW2}) how the non-separable Jacobi algebra and its residue pairing are recovered from the above extended TQFTs associated to a potential~$W$. 
\item 
The ``TQFTs with corners'' constructed by Khovanov and Rozansky in \cite{kr0401268} can be derived (as we do in Example~\ref{ex:KRTQFT}) directly from the cobordism hypothesis as extended TQFTs that assign the potentials $W = x^{N+1}$ to the point, for all $N\in \Z_{\geqslant 2}$. 
\end{itemize}

\subsubsection*{Acknowledgements} 

We thank 
	Ilka Brunner, 
	Domenico Fiorenza, 
	Jan Hesse, 
	Daniel Murfet 
		and 
	Christoph Schweigert 
for helpful discussions.
The work of N.\,C.~is partially supported by a grant from the Simons Foundation and by the stand-alone project P\,27513-N27 of the Austrian Science Fund. 
The work of F.\,M.\,M. was supported by a fellowship from the Peters-Beer Foundation.

\section{Bicategories of Landau-Ginzburg models} 
\label{sec:LGmodels}

In this section we collect the data that endows the bicategory of Landau-Ginzburg models $\LG$ with a symmetric monoidal structure in which every object has a dual and every 1-morphism has left and right adjoints. 
This is done in Sections~\ref{subsec:LGdef}--\ref{subsec:LGduals}. 
In Section~\ref{subsec:gradedLG} we explain how the analogous results hold for the bicategory of graded Landau-Ginzburg models $\LGgr$. 

\medskip 

Our main reference for bicategories, pseudonatural transformations, modifications etc.\ is \cite{benabou} (see \cite{LeinsterBasic2} for a quick reminder). 
Symmetric monoidal bicategories are reviewed in \cite{Gurskibook, spthesis} and \cite[App.\,A.4]{GregorDiss}; duals for objects and adjoints for 1-morphisms are e.\,g.\ reviewed in \cite{Pstragowski, spthesis}.

\subsection[Definition of $\LG$]{Definition of $\boldsymbol{\LG}$}
\label{subsec:LGdef}

Recall from \cite[Sect.\,2.2]{cm1208.1481} that for a fixed field~$\Bbbk$ of characteristic zero,\footnote{In fact we can allow any commutative unital ring~$\Bbbk$ if we generalise the definition of potentials as in \cite[Def.\,2.4]{cm1208.1481}.} the \textsl{bicategory of Landau-Ginzburg models} $\LG$ is defined as follows. 
An \textsl{object} is either the pair $(\Bbbk,0)$ or a pair $(\Bbbk[x_1,\dots,x_n], W)$ where $n\in\Z_{\geqslant 0}$ and $W\in\Bbbk[x_1,\dots,x_n]$ is a \textsl{potential}, i.\,e.\ the \textsl{Jacobi algebra}
\be
\Jac_W = \Bbbk[x_1,\dots,x_n]/(\partial_{x_1}W, \dots, \partial_{x_n}W)
\ee
is finite-dimensional over~$\Bbbk$. 
We often abbreviate lists of variables $(x_1,\ldots,x_n)$ by~$x$, and we often shorten $(\Bbbk[x], W)$ to~$W$. 

For two objects $(\Bbbk[x], W)$ and $(\Bbbk[z], V)$ we have 
\be\label{eq:LGHom}
\LG \big( (\Bbbk[x], W), (\Bbbk[z], V) \big) 
= 
\hmf\big( \Bbbk[x,z], V-W \big)^\oplus 
\ee 
for the \textsl{Hom category}. 
The right-hand side of~\eqref{eq:LGHom} is the idempotent completion of the homotopy category of finite-rank matrix factorisations of the potential $V-W$ over $\Bbbk[x,z]$. 
We denote matrix factorisations of $V-W$ by $(X,d_X)$ (or simply by~$X$ for short), where $X = X^0 \oplus X^1$ is a free $\Z_2$-graded $\Bbbk[x,z]$-module and $d_X \in \End^1_{\Bbbk[x,z]}(X)$ such that $d_X^2 = W\cdot \id_X$. 
The twisted differentials $d_X, d_{X'}$ induce differentials 
\be 
\label{eq:deltadifferential}
\delta_{X,X'} \colon \zeta \lmt d_{X'}\circ\zeta - (-1)^{|\zeta|}\zeta \circ d_X
\ee 
on the modules $\Hom_{\Bbbk[x,z]}(X,X')$, and 2-morphisms in $\LG$ are even cohomology classes with respect to these differentials. 
Finally, the idempotent completion $(-)^\oplus$ in~\eqref{eq:LGHom} is obtained by considering only matrix factorisations which are direct summands (in the homotopy category of all matrix factorisations) of finite-rank matrix factorisations. 
For more details, see \cite[Sect.\,2.2]{cm1208.1481}. 
	
In passing we note that the category $\LG(W,V)$ has a triangulated structure with the \textsl{shift functor} $[1] \colon \LG(W,V) \to \LG(W,V)$ acting on objects as
\be 
\label{eq:shiftfunctor}
[1] \colon \big( X^0 \oplus X^1, d_X \big) \lmt \big( X^1 \oplus X^0, -d_X \big) \, 
\ee 
see e.\,g.\ \cite[Sect.\,2.1]{kst0511155}. 
It follows that 
\be 
\label{eq:2shift}
[2] := [1] \circ [1] = \textrm{Id}_{\LG(W,V)} \, . 
\ee 

\textsl{Horizontal composition} in $\LG$ is given by functors 
\begin{align}
\otimes \colon 
\LG \Big( (\Bbbk[y], W_2), (\Bbbk[z], W_3) \Big)  \times \, & \LG \Big( (\Bbbk[x], W_1), (\Bbbk[y], W_2) \Big)  \nonumber
	\\ 
& \qquad \lra \LG \Big( (\Bbbk[x], W_1), (\Bbbk[z], W_3) \Big) 
\label{eq:horizfunctor}
\end{align}
which  act on 1-morphisms as 
\be\label{eq:horizYX}
(Y,X) \lmt Y \otimes X \equiv 
	\Big( 
		\big( (Y^0 \otimes_{\Bbbk[y]} X^0) \oplus (Y^1 \otimes_{\Bbbk[y]} X^1) \big) 
		\oplus 
		\big( (Y^0 \otimes_{\Bbbk[y]} X^1) \oplus (Y^1 \otimes_{\Bbbk[y]} X^0) \big) 
	\Big)
\ee 
with $d_{Y \otimes X} = d_Y \otimes 1 + 1 \otimes d_X$, and analogously on 2-morphisms. 
It follows from \cite[Sect.\,12]{dm1102.2957} that the right-hand side of~\eqref{eq:horizYX} is indeed a direct summand of a finite-rank matrix factorisation in the homotopy category over $\Bbbk[x,z]$, hence~$\otimes$ is well-defined. 
Moreover, the \textsl{associator} in $\LG$ is induced from the standard associator for modules, and we will suppress it notationally. 

\begin{remark}
\label{rem:extracare}
One technical issue in rigorously exhibiting $\LG$ as a symmetric monoidal bicategory (as summarised in Sections~\ref{subsec:monoidalLG}--\ref{subsec:symmetricLG}) is to establish an effective bookkeeping device that keeps track of how to transform and interpret various mathematical entities. 
Exercising such care already for the functor~$\otimes$ in~\eqref{eq:horizYX} we can write it as $(\iota_{x,z})_* \circ \otimes_{\Bbbk[x,y,z]} \circ ((\iota_{y,z})^* \times (\iota_{x,y})^*)$, where $\iota_{x,z} \colon \Bbbk[x,z] \hookrightarrow \Bbbk[x,y,z]$ etc.\ are the canonical inclusions, while $(-)_*$ and $(-)^*$ denote restriction and extension of scalars, respectively; \cite[Sect.\,2.3--2.4]{FlavioThesis} has more details. 
\end{remark}

For an object $(\Bbbk[x_1,\dots,x_n], W) \in \LG$, its \textsl{unit} 1-morphism is $(I_W, d_{I_W})$ with 
\be 
I_W = \bigwedge \Big( \bigoplus_{i=1}^n \Bbbk[x,x'] \cdot \theta_i \Big) 
\ee 
where $x' \equiv (x'_1,\dots,x'_n)$ is another list of~$n$ variables, $\{ \theta_i \}$ is a chosen $\Bbbk[x,x']$-basis of $\Bbbk[x,x']^{\oplus n}$, and 
\be 
d_{I_W} = \sum_{i=1}^n \Big( \partial_{[i]}^{x',x} W \cdot \theta_i \wedge (-) + (x'_i-x_i) \cdot \theta_i^* \Big) 
\ee 
where 
\be 
\partial_{[i]}^{x',x} W 
= 
\frac{W(x_1,\dots,x_{i-1}, x'_i, \dots x'_n) - W(x_1,\dots,x_i, x'_{i+1}, \dots x'_n)}{x'_i-x_i}
\ee 
and~$\theta_i^*$ is defined by linear extension of $\theta_i^*(\theta_j) = \delta_{i,j}$ and to obey the Leibniz rule with Koszul signs, cf.\ \cite[Sect.\,2.2]{cm1208.1481}. 
In the following we will suppress the symbol~$\wedge$ when writing elements in or operators on~$I_W$. 

Finally, the \textsl{left} and \textsl{right unitors} 
\be\label{eq:lambdarho}
\lambda_X \colon I_V \otimes X \lra X 
\, , \quad 
\rho_X \colon X \otimes I_W \lra X 
\ee 
for $X\in \LG(W,V)$ are defined as projection to $\theta$-degree zero on the units~$I_V$ and $I_W$, respectively; their explicit inverses (in the homotopy category $\LG(W,V)$) were worked out in \cite{cm1208.1481} to act as follows: 
\begin{align}
\lambda^{-1}_X(e_i) 
& = \sum_{l \geqslant 0} \sum_{a_1 < \cdots < a_l} \sum_j
\theta_{a_1} \ldots \theta_{a_l} 
\left\{ \partial^{z',z}_{[a_l]}d_X \ldots \partial^{z',z}_{[a_1]}d_X \right\}_{ji} \otimes e_{j}
\, , \nonumber
\\
\rho^{-1}_X(e_i) 
& = \sum_{l \geqslant 0} \sum_{a_1 < \cdots < a_l} \sum_j (-1)^{\binom{l}{2} + l|e_i|} e_j \otimes \left\{ \partial^{x',x}_{[a_1]}d_X \ldots \partial^{x',x}_{[a_l]}d_X \right\}_{ji} \theta_{a_1} \ldots \theta_{a_l}
\end{align}
where $\{ e_i \}$ is a basis of the module~$X$, and~$d_X$ is identified with the matrix representing it with respect to $\{ e_i \}$. 

\medskip 

In summary, the above structure makes $\LG$ into a bicategory, cf.\ \cite[Prop.\,2.7]{cm1208.1481}. 
Note that in $\LG$ it is straightforward to determine isomorphisms of commutative algebras (see e.\,g.\ \cite{kr0401268})
\be 
\End(I_W) \cong \Jac_W \, . 
\ee

\subsection[Monoidal structure for $\LG$]{Monoidal structure for $\boldsymbol{\LG}$}
\label{subsec:monoidalLG}

Endowing $\LG$ with a monoidal structure involves specifying the following data: 
\begin{itemize}%
[leftmargin=2.7em]
\item[(M1)] 
monoidal product $\boxempty \colon \LG \times \LG \to \LG$,  
\item[(M2)] 
monoidal unit $\mathds{I} \in \LG$, specified by a strict 2-functor $I\colon \mathds{1} \to \LG$, 
\item[(M3)] 
associator $a\colon \boxempty \circ (\boxempty \times \idlg) \to \boxempty \circ (\idlg \times \boxempty)$, which is part of an adjoint equivalence, 
\item[(M4)] 
pentagonator 
$\pi \colon (\idlg \boxempty a) \circ a \circ (a\boxempty \idlg) \to a \circ a$ (using shorthand notation explained below), 
\item[(M5)] 
left and right unitors 
	$l\colon \boxempty \circ (I \times \idlg) \to \idlg$, 
	$r\colon \boxempty \circ (\idlg \times I) \to \idlg$, 
\item[(M6)] 
2-unitors 
	$\lambda' \colon 1 \circ (l \times 1) \to (l * 1) \circ (a * 1)$, 
	$\rho' \colon r \circ 1 \to (1 * (1 \times r)) \circ (a * 1)$, 
	and 
	$\mu' \colon 1 \circ (r\times 1) \to (1 * (1 \times l)) \circ (a * 1)$ 
	(using shorthand notation), 
\end{itemize}
subject to the coherence axioms spelled out e.\,g.\ in \cite[Sect.\,2.3]{spthesis}. 
In this section we provide the above data for $\LG$, which come as no surprise to the expert. 
The coherence axioms are carefully checked in \cite[Ch.\,3]{FlavioThesis}. 

\bigskip 

\noindent 
(M1) 
We start with the \textsl{monoidal product}. 
It is a 2-functor 
\be 
\sta \colon \LG \times \LG \lra \LG 
\ee 
which is basically given by tensoring over~$\Bbbk$ and taking sums of potentials. 
More precisely, according to \cite[Prop.\,3.1.12]{FlavioThesis}, $\sta$ acts as 
\be 
(W,V) \equiv \big( (\Bbbk[x], W), (\Bbbk[z], V) \big) \lmt \big( \Bbbk[x,z], W+V) \big) \equiv W+V 
\ee 
on objects, while the functors on Hom catgories 
\be 
\sta_{(V_1,V_2), (W_1,W_2)} \colon \big( \LG \times \LG \big) \big( (V_1,V_2), (W_1,W_2) \big) \lra \LG\big(V_1+V_2, W_1+W_2\big) 
\ee 
are given by~$\otimes_{\Bbbk}$ (up to a reordering of variables similar to the situation in Remark~\ref{rem:extracare}, see \cite[Def.\,3.1.3]{FlavioThesis}). 
Compatibility with horizontal composition is witnessed by the natural isomorphisms $\sta_{(U_1,U_2), (V_1,V_2), (W_1,W_2)} \colon \otimes \circ (\sta \times \sta) \to \sta \circ \otimes$ whose $((Y_1,Y_2), (X_1,X_2))$-components are given by linearly extending 
\begin{align}
\big( (Y_1\sta Y_2) \otimes (X_1\sta X_2) \big) & \lra (Y_1 \otimes X_1) \sta (Y_2 \otimes X_2) \, , \nonumber
\\ 
(f_1 \otimes_\Bbbk f_2) \otimes (e_1 \otimes_\Bbbk e_2) & \lmt (-1)^{|f_2| \cdot |e_1|} \cdot (f_1 \otimes e_1) \otimes_\Bbbk (f_2 \otimes e_2)
\end{align}
for $\Z_2$-homogeneous module elements $e_1,e_2,f_1,f_2$, and isomorphisms on units 
\be 
\sta_{(W_1,W_2)} \colon I_{W_1+W_2} \lra I_{W_1} \sta I_{W_2} 
\ee are also standard, cf.\ \cite[Lem.\,3.1.4]{FlavioThesis}. 

\bigskip 

\noindent 
(M2) 
The \textsl{unit object} in $\LG$ is 
\be 
\mathds{I} := (\Bbbk,0) \, . 
\ee  
Let~$\mathds{1}$ be the 2-category with a single object~$*$ and only identity 1- and 2-morphisms. 
We define a strict 2-functor $I\colon \mathds{1} \to \LG$ by setting $I(*) = \mathds{I}$. 

\bigskip 

\noindent 
(M3) 
The \textsl{associator} is a pseudonatural transformation 
\be 
a\colon \sta \circ \, (\sta \times \idlg) \lra \sta \circ (\idlg \times \sta) \circ \mathfrak{A} \, . 
\ee 
Here~$\mathfrak{A}$ is the rebracketing 2-functor $(\LG \times \LG) \times \LG \to \LG \times (\LG \times \LG)$, which we usually treat as an identity. 
The 1-morphism components $a_{((U,V),W)}$ and 2-morphism components $a_{((X,Y),Z)}$ of the associator are given by 
\be 
a_{((U,V),W)} = I_{U+V+W} 
\, , \quad 
a_{((X,Y),Z)} = \lambda_{(X\sta Y) \sta Z}^{-1} \circ \mathcal A_{X,Y,Z} \circ \rho_{X\sta (Y\sta Z)} \, , 
\ee 
where $\mathcal A_{X,Y,Z} \colon X \sta (Y \sta Z) \to (X\sta Y) \sta Z$ is the rebracketing isomorphism for $\LG$, while~$\lambda$ and~$\rho$ are the 2-isomorphisms~\eqref{eq:lambdarho}. 

The associator~$a$ and the pseudonatural transformation
\be 
a^- \colon \sta \circ \, (\idlg \times \sta) \circ \mathfrak{A} \lra \sta \circ (\sta \times \idlg)
\ee 
with components $a^-_{((U,V),W)} = I_{U+V+W}$, $a^-_{((X,Y),Z)} = \lambda_{X\sta (Y \sta Z)}^{-1} \circ \mathcal A^{-1}_{X,Y,Z} \circ \rho_{(X\sta Y) \sta Z}$ are part of a biadjoint equivalence, see \cite[Lem.\,3.2.5--3.2.6]{FlavioThesis}. 

\bigskip 

\noindent 
(M4) 
The \textsl{pentagonator} is an invertible modification 
\begin{align} 
	& 
	\pi \colon \big(1_\sta * (1_{\idlg} \times a)\big) \circ \big(a * 1_{\idlg \times \sta \times \idlg}\big) \circ \big(1_\sta * (a \times 1_{\idlg} )\big)
	\nonumber 
	\\
	&\qquad\qquad\qquad\qquad\qquad\qquad
	 \lra \big(a * 1_{\idlg \times \idlg \times \sta}\big) \circ \big(a*1_{\sta \times \idlg \times \idlg}\big)
\end{align} 
where here and below we write vertical and horizontal composition of pseudonatural transformations as~$\circ$ and~$*$, respectively. 
We also typically use shorthand notation for the sources and targets of modifications obtained by whiskering; for example, the pentagonator is then written 
\be 
\pi \colon (\idlg \sta a) \circ a \circ (a\sta \idlg) \lra a \circ a \, . 
\ee 
Its components are 
\be 
\pi_{(((T,U),V),W)} = \lambda_{I_{T+U+V+W} \otimes I_{T+U+V+W}} \circ \big(\sta_{(T,U+V+W)} \otimes 1_{I_{T+U+V+W}} \otimes \sta_{(T+U+V,W)} \big) 
\, . 
\ee 

\bigskip 

\noindent 
(M5) 
The \textsl{left} and \textsl{right (1-morphism) unitors} are pseudonatural transformations 
\be 
l\colon \sta \circ \, (I \times \idlg) \lra \idlg 
\, , \quad 
r\colon \sta \circ \, (\idlg \times I) \lra \idlg
\ee 
whose components are given by 
\be 
%arxiv_v2: 
	%l_{(*,W)} = I_W = r_{(*,W)} 
	 l_{(*,W)} = I_W = r_{(W,*)} 
\, , \quad 
l_{(1_*, X)} = \lambda^{-1}_X \circ \rho_X = r_{(X,1_*)} \, , 
\ee 
where we identify $\mathds{1}\times \LG \equiv \LG \equiv \LG \times \mathds{1}$ and $I_0 \sta X \equiv X \equiv X \sta I_0$ (see \cite[Lem.\,3.1.8\,\&\,3.2.11\,\&\,3.2.15]{FlavioThesis} for details). 
The unitors $l,r$ are part of biadjoint equivalences $(l,l^-)$, $(r,r^-)$ as explained in \cite[Lem.\,3.2.13--3.2.15]{FlavioThesis}. 

\bigskip 

\noindent 
(M6) 
The \textsl{2-unitors} are invertible modifications 
	$\lambda' \colon 1 \circ (l \times 1) \to (l * 1) \circ (a * 1)$, 
	$\rho' \colon r \circ 1 \to (1 * (1 \times r)) \circ (a * 1)$, 
	and 
	$\mu' \colon 1 \circ (r\times 1) \to (1 * (1 \times l)) \circ (a * 1)$, 
written here in the shorthand notation also employed in (M4) above, 
whose components are 
\begin{align}
\lambda'_{((*,V),W)} & = \lambda^{-1}_{I_{V+W}} \circ \sta^{-1}_{(V,W)} 
\, , \quad 
\rho'_{((V,W),*)} = (\sta_{(V,W)} \otimes 1_{I_{V+W}}) \circ \lambda^{-1}_{I_{V+W}}
\, , \nonumber 
\\ 
\mu'_{((V,*),W)} & = \rho^{-1}_{I_V \sta I_W} 
\, . 
\end{align}

\medskip 

\begin{proposition}
The data (M1)--(M6) endow $\LG$ with a monoidal structure. 
\end{proposition}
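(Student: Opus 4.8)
The plan is to verify that the data (M1)--(M6) satisfy the coherence axioms of a monoidal bicategory as listed in \cite[Sect.\,2.3]{spthesis}: the well-definedness (modification axioms) of $\pi$, $\lambda'$, $\rho'$ and $\mu'$, the non-abelian pentagon ($4$-cocycle) axiom for~$\pi$, and the remaining unit-coherence axioms relating $\lambda'$, $\rho'$, $\mu'$ to the pentagonator. That the associator~$a$ and the unitors $l,r$ are part of adjoint equivalences has already been recorded in (M3) and (M5), and invertibility of all modification components is immediate, since by construction each component is a composite of the isomorphisms $\sta_{(\cdots)}$ and of the $2$-isomorphisms $\lambda$, $\rho$ of~\eqref{eq:lambdarho}.

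The structural observation that makes the verification tractable is that every $1$-morphism component of $a$, $l$ and $r$ is a unit $1$-morphism $I_{(\cdots)}$, and every relevant $2$-morphism component is assembled from the unitors $\lambda$, $\rho$, the rebracketing isomorphism $\mathcal A$ of $\LG$, and the coherence isomorphisms $\sta_{(\cdots)}$ of the monoidal product. All of these descend from the canonical structural isomorphisms -- associativity and unit constraints, together with the Koszul-signed symmetry -- of the symmetric monoidal category of $\Z_2$-graded free modules under $\otimes_\Bbbk$ and $\otimes_{\Bbbk[y]}$. I would therefore prove each coherence axiom by the same template: paste the two competing $2$-cells, replace every component by its module-level definition using the explicit formulas for $\lambda^{-1}$ and $\rho^{-1}$, and then collapse the resulting diagram by Mac Lane coherence for tensor products of modules. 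Concretely, the pentagonator axiom should reduce to the pentagon identity for $\mathcal A$ together with naturality of $\lambda$ and $\rho$, while the unit axioms should each reduce to a single triangle identity at the module level.

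The main obstacle is not conceptual but organizational: one must control the many variable reorderings implicit in $\sta$ and in horizontal composition (the extension/restriction-of-scalars bookkeeping of Remark~\ref{rem:extracare}), and verify that the Koszul signs introduced by the components of $\sta_{(\cdots)}$ match across all whiskerings and horizontal composites $*$ of pseudonatural transformations. Moreover the collapse must be carried out \emph{in the idempotent-completed homotopy category}, so one cannot argue purely abstractly but must work with the explicit (and genuinely nontrivial) inverses $\lambda^{-1}$, $\rho^{-1}$ recorded above. I expect the pentagonator $4$-cocycle axiom to be by far the most laborious step, as it is the largest diagram and involves the most simultaneous reorderings, whereas the unit axioms are comparatively short. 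Accordingly I would fix the bookkeeping conventions once and for all as in Remark~\ref{rem:extracare}, and then check each axiom componentwise; this is exactly the routine but lengthy program carried out in \cite[Ch.\,3]{FlavioThesis}, on which the statement rests.
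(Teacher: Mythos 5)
Your proposal matches the paper's own treatment: the paper simply cites the lengthy componentwise verification of the coherence axioms carried out in \cite[Sect.\,3.1--3.2]{FlavioThesis} (Theorem 3.2.18 there), and your outline --- reducing each axiom, via the explicit formulas for $\lambda^{-1}$, $\rho^{-1}$ and the $\sta$-coherence isomorphisms, to module-level Mac Lane coherence while tracking variable reorderings and Koszul signs as in Remark~\ref{rem:extracare} --- is precisely the program executed in that reference. No gaps; your sketch is a faithful (and somewhat more informative) account of the same argument.
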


\begin{proof}
The straightforward but lengthy check of all coherence axioms is performed to prove Theorem 3.2.18 in \cite[Sect.\,3.1--3.2]{FlavioThesis}. 
\end{proof}

\subsection[Symmetric monoidal structure for $\LG$]{Symmetric monoidal structure for $\boldsymbol{\LG}$}
\label{subsec:symmetricLG}

Endowing the monoidal bicategory $\LG$ with a symmetric braided structure amounts to specifying the following data: 

\begin{itemize}%
[leftmargin=2.4em]
\item[(S1)] 
braiding $b\colon \sta \to \sta \circ \tau$ as part of and adjoint equivalence $(b,b^-)$, where $\tau \colon \LG \times \LG \to \LG \times \LG$ is the strict 2-functor which acts as $(\zeta,\xi) \mapsto (\xi,\zeta)$ on objects, 1- and 2-morphisms, 
\item[(S2)]
syllepsis $\sigma \colon 1_\sta \to b^- \circ b$, 
\item[(S3)]
$R \colon a \circ b \circ a \to (\idlg \sta b) \circ a \circ (b \sta \idlg)$ 
and 
$S \colon a^- \circ b \circ a^- \to (b \sta \idlg) \circ a^- \circ (\idlg \sta b)$, 
\end{itemize}
subject to the coherence axioms spelled out e.\,g.\ in \cite[Sect.\,2.3]{spthesis}. 
In this section we provide the above data which are discussed in detail in \cite[Sect.\,3.3]{FlavioThesis}. 

\bigskip 

\noindent 
(S1) 
The \textsl{braiding} is a pseudonatural transformation 
\be 
b \colon \sta \lra \sta \circ \tau 
\ee 
whose 1-morphism components $b_{(V,W)}$ are given by $I_{V+W}$ (up to a reordering of variables, see \cite[Not.\,3.1.2\,\&\,Lem.\,3.3.5]{FlavioThesis}), while the 2-morphism components 
\be 
b_{(X,Y)} \colon (Y\sta X) \otimes b_{(V_1,V_2)} \lra b_{(W_1,W_2)} \otimes (X\sta Y)
\ee 
are defined in \cite{FlavioThesis} as natural compositions of canonical module isomorphisms and structure maps of the bicategory $\LG$. 
Explicitly, if $\{ e_a \}$ and $\{ f_b \}$ are bases of the underlying modules of~$X$ and~$Y$, respectively, we have  
\be 
b_{(X,Y)} \colon (f_b \otimes e_a ) \otimes \theta_{i_1}^{j_1} \ldots \theta_{i_{m}}^{j_{m}} 
	\lmt 
(-1)^{|e_a| \cdot |f_b|} \delta_{j_1,0} \dots \delta_{j_{m},0} \cdot \lambda_{X\sta Y}^{-1} (e_a \otimes f_b )
\, . 
\ee 

The braiding~$b$ and the pseudonatural transformation 
\be 
b^- \colon \sta \circ \, \tau \lra \sta 
\ee 
with components $b^-_{(V,W)} = b_{(W,V)}$ and $b^-_{(X,Y)} = b_{(Y,X)}$ are part of a biadjoint equivalence, see \cite[Sect.\,3.3.2]{FlavioThesis}. 

\begin{example}
For a potential $W=x^{N+1}$, $N\in\Z_{\geqslant 2}$, the matrix factorisation $b_{(W,W)}$ is precisely what is assigned to a ``virtual crossing'' in the construction of homological $\mathfrak{sl}_N$-tangle invariants of Khovanov and Rozansky \cite{kr0401268} (see the second expression in \cite[Eq.\,(A.9)]{kr0701333}).
\end{example} 

\noindent 
(S2) 
The \textsl{syllepsis} is an invertible modification 
\be 
\sigma \colon 1_\sta \lra b^- \circ b
\ee 
whose components $\sigma_{(V,W)} \colon I_{V+W} \to b^-_{(V,W)} \otimes b_{(V,W)}$ are given by $\lambda_{I_{V+W}}^{-1}$ (up to a reordering of variables and a sign-less swapping of tensor factors, see \cite[Lem.\,3.3.8]{FlavioThesis}). 

\bigskip 

\noindent 
(S3) 
The invertible modifications 
\begin{align}
& R \colon a \circ b \circ a \lra (\idlg \sta b) \circ a \circ (b \sta \idlg)
\, , \nonumber
\\
& S \colon a^- \circ b \circ a^- \lra (b \sta \idlg) \circ a^- \circ (\idlg \sta b)
\end{align}
have components $R_{((U,V),W)}$ and $S_{((U,V),W)}$ which act on basis elements, i.\,e.\ on tensor and wedge products of $\theta$-variables, by a reordering with appropriate signs, see \cite[Lem.\,3.3.11]{FlavioThesis} for the lengthy explicit expressions. 

\begin{theorem} 
The data (M1)--(M6) and (S1)--(S3) endow $\LG$ with a symmetric monoidal structure. 
\end{theorem}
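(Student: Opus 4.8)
The plan is to verify, one coherence axiom at a time, the finite list of conditions defining a symmetric monoidal bicategory in the sense of the referenced definition (e.g.\ \cite[Sect.\,2.3]{spthesis}), taking the monoidal structure (M1)--(M6) from the previous proposition as given. Since that monoidal data is already in place, what remains is purely the braided, sylleptic, and symmetric part supplied by (S1)--(S3). I would first confirm that (S1)--(S3) are well-defined, and then check the three groups of axioms they must satisfy.

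First I would establish well-definedness. This means verifying that $b$ is genuinely a pseudonatural transformation: its 2-morphism components $b_{(X,Y)}$ must be isomorphisms in the relevant Hom categories and satisfy the pseudonaturality coherence squares. Because every 1-morphism component of $b$ (and of $a$) is a unit matrix factorisation $I_{(\cdots)}$, and because $b_{(X,Y)}$ is assembled from the explicit inverse unitor $\lambda^{-1}$ together with a sign-twisted swap of tensor factors, this reduces to checking that the stated formula represents an even, homotopy-invertible cohomology class. The same holds for showing that $(b,b^-)$ form a biadjoint equivalence and that $\sigma$, $R$, $S$ are invertible modifications; the key inputs are the identities $b^-_{(V,W)}=b_{(W,V)}$ and $\sigma_{(V,W)}=\lambda_{I_{V+W}}^{-1}$ (up to the sign-free tensor swap and variable reordering).

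Next I would check the axioms themselves, in three families: (a) the braiding hexagon-coherence conditions, which relate $b$, the associator $a$, and the two modifications $R$ and $S$; (b) the syllepsis axioms, which relate $\sigma$ to $R$ and $S$; and (c) the symmetry axiom, asserting that $\sigma$ is compatible with its braided reverse so that $b^-\circ b$ and $b\circ b^-$ are coherently trivialised. Each axiom is an equality of pasting diagrams of 2-morphisms, and the unifying simplification is that all 1-morphism components are units $I$, so each axiom collapses to an equality of explicit 2-morphisms built from $\lambda^{-1}$, $\rho$, the structure maps $\sta_{(\cdots)}$, and canonical module swaps. Evaluating on a basis of $\theta$-monomials tensored with module generators $e_a\otimes f_b$, each side becomes an explicit element carrying a Koszul sign, and the axiom reduces to a sign identity. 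The cleanest way to organise this is to exhibit all the $\LG$-structure maps as induced from the standard symmetric monoidal structure on $\Z_2$-graded $\Bbbk$-modules with the Koszul sign rule, extended by the variable-reordering bookkeeping of Remark~\ref{rem:extracare}; each $\LG$-coherence axiom should then follow from the corresponding, already coherent, identity for graded modules.

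I expect the main obstacle to lie precisely in the interplay between the $\theta$-degree-zero projection built into $b_{(X,Y)}$ (via the factors $\delta_{j_1,0}\cdots\delta_{j_m,0}$) and the higher-$\theta$-degree tails contributed by the inverse unitors $\lambda^{-1}$ and $\rho^{-1}$. Because of these projections the relevant equalities hold only at the level of homotopy classes, so one cannot simply match representatives term by term; instead one must verify that the different representatives appearing on the two sides of each axiom are genuinely cohomologous and that all the accumulated Koszul signs agree after the simultaneous reordering of $\theta$-variables, tensor factors, and polynomial variables. Carrying this sign-and-homotopy bookkeeping through every pasting diagram is exactly the lengthy verification recorded in \cite[Sect.\,3.3]{FlavioThesis}.
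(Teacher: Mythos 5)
Your proposal is correct and matches the paper's approach: the paper's proof simply defers the well-definedness of (S1)--(S3) and the verification of the braiding, syllepsis, and symmetry coherence axioms to \cite[Sect.\,3.1\,\&\,3.3]{FlavioThesis}, which is exactly the sign-and-homotopy bookkeeping you outline (all 1-morphism components being units~$I$, reduction to Koszul-sign identities on $\theta$-monomials, equalities holding at the level of cohomology classes). Your organisation into well-definedness plus the three axiom families is the same verification the thesis carries out.
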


\begin{proof}
It is shown in \cite[Sect.\,3.1\,\&\,3.3]{FlavioThesis} that the data (S1)--(S3) are well-defined and satisfy the coherence axioms for symmetric braidings. 
\end{proof}

\medskip

We note that instead of directly constructing the data (M1)--(M6) and (S1)--(S3) and verifying their coherence axioms, one could also employ Shulman's method of constructing symmetric monoidal bicategories from symmetric monoidal double categories \cite{Shulman}. 
A double category of Landau-Ginzburg models was first studied in \cite{McNameeThesis}.

\subsection[Duality in $\LG$]{Duality in $\boldsymbol{\LG}$} 
\label{subsec:LGduals}

\subsubsection{Adjoints for 1-morphisms}

Endowing $\LG$ with left and right adjoints for 1-morphisms amounts to specifying the following data: 

\begin{itemize}%
[leftmargin=2.6em]
\item[(A1)] 
1-morphisms $\dX, X^\dagger \in \LG(V,W)$ for every $X \in \LG(W,V)$, 
\item[(A2)]
2-morphisms $\ev_X \colon \dX \otimes X \to I_W$, $\coev_X \colon I_V \to X \otimes \dX$, $\tev_X \colon X \otimes X^\dagger \to I_V$ and $\tcoev_X \colon I_W \to X^\dagger \otimes X$ for every $X \in \LG(W,V)$, 
\end{itemize}
subject to coherence axioms. 
In this section we recall the above data as constructed in \cite{cm1208.1481} (this reference also spells out the coherence axioms). 

\bigskip 

\noindent 
(A1) 
Setting $X^\vee = \Hom_{\Bbbk[x,z]}(X,\Bbbk[x,z])$ and defining the associated twisted differential by $d_{X^\vee}(\phi) = (-1)^{|\phi| + 1} \phi \circ d_X$ for homogeneous $\phi \in X^\vee$, the \textsl{left} and \textsl{right adjoints} of 
\be 
X\in \LG\big((\Bbbk[x_1,\dots,x_n], W), (\Bbbk[z_1,\dots,z_m],V)\big)
\ee 
are given by 
\be 
\label{eq:Xdual}
\dX = X^\vee[m]
\quad \text{and} \quad 
X^\dagger = X^\vee[n] \, , 
\ee 
respectively, where $[m]$ is the $m$-th power of the shift functor $[1]$ in \eqref{eq:shiftfunctor} with itself. 
Hence if in a chosen basis~$d_X$ is represented by the block matrix 
$
(\begin{smallmatrix}
0 & D_1 \\ 
D_0 & 0
\end{smallmatrix})
$, 
then in the dual basis~$d_{\dX}$ is represented by 
$
(\begin{smallmatrix}
0 & D_0^{\textrm{T}} \\ 
-D_1^{\textrm{T}} & 0
\end{smallmatrix})
$ 
if~$m$ is even, and by 
$
(\begin{smallmatrix}
0 & D_1^{\textrm{T}} \\ 
-D_0^{\textrm{T}} & 0
\end{smallmatrix})
$ 
if~$m$ is odd, and similarly for~$d_{X^\dagger}$. 
It follows that $\dX \cong X^\dagger$ if $m=n \,\textrm{mod}\,2$. 

\bigskip 

\noindent 
(A2) 
To present the \textsl{adjunction 2-morphisms}
\begin{align}
& \ev_X \colon \dX \otimes X \lra I_W 
\, , \quad 
\coev_X \colon I_V \lra X \otimes \dX 
\, , \nonumber 
\\
& \tev_X \colon X \otimes X^\dagger \lra I_V 
\, , \quad 
\tcoev_X \colon I_W \lra X^\dagger \otimes X 
\, , \label{eq:4adjumaps}
\end{align}
recall from \cite{Lipman} the basic properties of residues (collected for our purposes in \cite[Sect.\,2.4]{cm1208.1481}), let $\{ e_i \}$ be a basis of~$X$, and define $\Lambda^{(x)} = (-1)^n \partial_{x_1}d_X\ldots \partial_{x_n}d_X$, $\Lambda^{(z)} = \partial_{z_1}d_X\ldots \partial_{z_m}d_X$. 
In \cite{cm1208.1481} the theory of homological perturbation and associative Atiyah classes were used to obtain the following explicit expressions:
\begin{align}
\ev_X( e_i^* \otimes e_j )
&  = \sum_{l \geqslant 0} \sum_{a_1 < \cdots < a_l} (-1)^{\binom{l}{2}+l|e_j|} \, \theta_{a_1} \ldots \theta_{a_l} 
\nonumber 
\\
& \qquad \cdot \Res \left[ \frac{ \big\{ \Lambda^{(z)} \, \partial^{x,x'}_{[a_1]} d_X  \ldots \partial^{x,x'}_{[a_l]} d_X \big\}_{ij}  \, \operatorname{d}\!z }{\partial_{z_1}V, \ldots, \partial_{z_m} V} \right] , 
\nonumber 
\\
\widetilde\ev_X( e_j \otimes e_i^* ) 
& = \sum_{l \geqslant 0} \sum_{a_1 < \cdots < a_l} (-1)^{l + (n+1)|e_j|} \, \theta_{a_1} \ldots \theta_{a_l} 
\nonumber 
\\
& \qquad \cdot \Res \left[ \frac{ \big\{ \partial^{z,z'}_{[a_l]} d_X  \ldots \partial^{z,z'}_{[a_1]} d_X \, \Lambda^{(x)} \big\}_{ij} \, \operatorname{d}\!x }{\partial_{x_1}W, \ldots, \partial_{x_n} W} \right] , 
\nonumber 
\\
\coev_X(\gamma) 
& = \sum_{i,j} (-1)^{\binom{r+1}{2} + mr + s_m} \left\{ \partial^{z,z'}_{[b_1]}d_X \ldots \partial^{z,z'}_{[b_r]}d_X \right\}_{ij} e_{i} \otimes e_j^* \, , 
\nonumber 
\\
\widetilde\coev_X( \bar\gamma ) 
& = \sum_{i,j} (-1)^{(\bar r+1)|e_j| + s_n} \left\{ \partial^{x,x'}_{[\bar b_{\bar r}]}(d_X) \ldots \partial^{x,x'}_{[\bar b_1]}(d_X) \right\}_{ji}  e_i^* \otimes e_j 
\label{eq:evalcoevX}
\end{align}
where $b_i, \bar b_{\bar\jmath}$ and $s_m, s_n\in\Z_2$ are uniquely determined by requiring that $b_1 < \cdots < b_r$, $\bar b_1 < \cdots < \bar b_{\bar r}$, as well as $\bar{\gamma} \theta_{\bar b_1} \ldots \theta_{\bar b_{\bar r}} = (-1)^{s_n} \theta_1 \ldots \theta_n$ and $\gamma \theta_{b_1} \ldots \theta_{b_r} = (-1)^{s_m} \theta_1 \ldots \theta_m$. 

\begin{theorem}
\label{thm:LGsymmetricmonoidal} 
The data (A1)--(A2) endow the bicategory $\LG$ with left and right adjoints for every 1-morphism. 
\end{theorem}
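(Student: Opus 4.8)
The content of the theorem is that the four 2-morphisms in~\eqref{eq:4adjumaps} satisfy the zig-zag axioms, so that $(\coev_X,\ev_X)$ exhibits $\dX$ as a left adjoint of~$X$ and $(\tcoev_X,\tev_X)$ exhibits $X^\dagger$ as a right adjoint. Suppressing the associator and the unitors of~\eqref{eq:lambdarho}, this amounts to four equalities of 2-morphisms (i.\,e.\ of even $\delta$-cohomology classes) in~$\LG$,
\begin{align}
(1_X \otimes \ev_X) \circ (\coev_X \otimes 1_X) &= 1_X \,,
&
(\ev_X \otimes 1_{\dX}) \circ (1_{\dX} \otimes \coev_X) &= 1_{\dX} \,,
\nonumber \\
(\tev_X \otimes 1_X) \circ (1_X \otimes \tcoev_X) &= 1_X \,,
&
(1_{X^\dagger} \otimes \tev_X) \circ (\tcoev_X \otimes 1_{X^\dagger}) &= 1_{X^\dagger} \,.
\label{eq:snakes}
\end{align}
Since the maps in~\eqref{eq:evalcoevX} are produced by homological perturbation, their well-definedness as chain maps --- and hence as 2-morphisms --- may be taken from~\cite{cm1208.1481}; the task is to verify~\eqref{eq:snakes}.

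My plan is to first cut down the bookkeeping by exploiting a symmetry. Inspecting~\eqref{eq:evalcoevX}, the left-adjoint data $(\ev_X,\coev_X)$ is built from $\Lambda^{(z)}$ and a residue over the target variables~$z$ (i.\,e.\ over~$V$), whereas the right-adjoint data $(\tev_X,\tcoev_X)$ is built from $\Lambda^{(x)}$ and a residue over the source variables~$x$ (over~$W$). Exchanging $x \leftrightarrow z$, $W \leftrightarrow V$, $n \leftrightarrow m$ therefore interchanges the two adjunctions up to signs and the shift-identification $\dX \cong X^\dagger$ of~(A1); so it suffices to establish the left-adjoint identities (the top row of~\eqref{eq:snakes}), those in the bottom row following mutatis mutandis. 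For the top row I would substitute the explicit formulas from~\eqref{eq:evalcoevX} together with the explicit unitor inverses $\lambda^{-1}_X,\rho^{-1}_X$, so that each snake composite becomes an (a priori multiple) sum over the wedge-degree~$l$ and over the basis $\{e_i\}$ of~$X$.

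The computation then reduces to residue calculus. Working in the homotopy category, I would argue that all terms with $l \geqslant 1$ are either $\delta$-exact or cancel after the $\theta$-reordering, using that $d_{I_W}$ acts through the factors $\partial_{[i]}^{x',x}W$ and $(x'_i - x_i)$; the only surviving contribution is the ``diagonal'' $l = 0$ term. That term is governed by the Grothendieck residue $\Res[\tfrac{-\,\operatorname{d}\!z}{\partial_{z_1}V,\dots,\partial_{z_m}V}]$, and the snake collapses to $1_X$ precisely because of the non-degeneracy (reproducing) property of this residue pairing recalled from~\cite{Lipman} --- the same property that makes $\Jac_V$ a Frobenius algebra.

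The hardest part will be the sign and combinatorial bookkeeping: tracking the Koszul signs, the $(-1)^{\binom{l}{2}}$ prefactors and the repeated $\theta$-reorderings so that the formally infinite sums telescope and only the $l=0$ term persists in cohomology, and then matching the resulting residue to the identity. A cleaner but far less explicit alternative would bypass~\eqref{eq:evalcoevX} entirely: a finite-rank matrix factorisation is a dualisable object in the symmetric monoidal homotopy category of $\Z_2$-graded $\Bbbk[x,z]$-modules, so the tensor--hom adjunction already furnishes abstract adjoints, which one then transports along the equivalence and identifies with $X^\vee[m]$ and $X^\vee[n]$; this route proves existence without recovering the explicit residue expressions that~(A2) provides.
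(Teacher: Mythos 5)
The paper's own proof is a one-line citation: this is \cite[Thm.\,6.11]{cm1208.1481}, and crucially the proof given there does \emph{not} proceed by substituting the residue formulas~\eqref{eq:evalcoevX} into the zig-zag composites and cancelling. Your setup is correct (the four zig-zag identities, well-definedness of the maps from \cite{cm1208.1481}), but the core step of your plan --- that after substitution all wedge-degree $l \geqslant 1$ terms are $\delta$-exact or cancel, leaving only the $l=0$ term, which reproduces $1_X$ by non-degeneracy of the residue pairing --- is asserted rather than argued, and it is precisely the hard content of the theorem. For a general 1-morphism $X$ the snake composite is only \emph{homotopic} to the identity via a nontrivial homotopy; the higher-$l$ terms carry the factors $\partial_{[a]}d_X$ that are exactly what makes $\lambda_X^{-1}, \rho_X^{-1}$ nontrivial, and the unitors you propose to suppress contribute their own sums over~$l$ which must interfere with those in $\ev_X, \coev_X$ --- no termwise collapse is available. (The places in this paper where only $l=0$ survives, as in the proofs of Theorems~\ref{thm:JacW1} and~\ref{thm:JacW2}, are special because there the evaluation 1-morphism has trivial target.) In \cite{cm1208.1481} the direct check is avoided: the adjunction maps are \emph{constructed} by homological perturbation and associative Atiyah classes as images of canonical maps under explicit homotopy equivalences (with the finiteness input of \cite{dm1102.2957}), and the zig-zag identities follow from that characterisation, not from telescoping residue calculus. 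So between your plan and a proof lies exactly the technology you bracket off as ``bookkeeping''.

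Two further cautions. First, the $x \leftrightarrow z$, $W \leftrightarrow V$ symmetry does not interchange the two adjunctions as cleanly as claimed: it turns a matrix factorisation of $V-W$ into one of $W-V$, and the left and right adjoints genuinely differ --- shifts $[m]$ versus $[n]$, residues over $\Bbbk[z]$ with $\partial_{z_j}V$ versus over $\Bbbk[x]$ with $\partial_{x_i}W$, and $\dX \cong X^\dagger$ only when $m = n \bmod 2$ --- so the bottom row is not ``mutatis mutandis'' but a parallel computation with its own signs. Second, your abstract alternative is closer in spirit to the actual proof, but as stated it misses the main difficulty: adjoints in $\LG$ are adjoints for \emph{horizontal composition} over the intermediate variables, with unit and counit landing in the stabilised diagonals $I_W, I_V$ --- not duals in the monoidal category of $\Z_2$-graded modules. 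Transporting the tensor--hom adjunction into the bicategory requires knowing that the relevant composites are summands of finite-rank matrix factorisations \cite{dm1102.2957} and identifying $I_W$ with a stabilised Koszul resolution; that transport, made explicit, is the content of \cite{cm1208.1481}, not a shortcut around it.
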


\begin{proof}
This is \cite[Thm.\,6.11]{cm1208.1481}. 
(In fact $\LG$ even has a ``graded pivotal'' structure, see \cite[Sect.\,7]{cm1208.1481}.)
\end{proof}

\subsubsection{Duals for objects} 

Endowing the symmetric monoidal bicategory $\LG$ with duals for objects amounts to specifying the following data: 

\begin{itemize}%
[leftmargin=2.6em]
\item[(D1)] 
an object $W^* \equiv (\Bbbk[x], W)^* \in \LG$ for every $W\equiv (\Bbbk[x],W) \in \LG$, 
\item[(D2)] 
1-morphisms $\ev_W \colon W^* \sta W \to \mathds{I}$ and $\coev_W \colon \mathds{I} \to W \sta W^*$ such that there are 2-isomorphisms 
\begin{align}
& c_{\textrm{l}} \colon r_{(W,*)} \otimes (I_W \sta \ev_W) \otimes a_{((W,W^*),W)} \otimes (\coev_W \sta I_W) \otimes l_{W}^- \lra I_W
\, , \nonumber
\\ 
& c_{\textrm{r}} \colon l_{(*,W^*)} \otimes (\ev_W \sta I_{W^*}) \otimes a^-_{((W^*,W),W^*)} \otimes (I_{W^*} \sta \coev_W) \otimes r^-_{W^*} \lra I_{W^*} 
\, . \nonumber
\end{align}
\end{itemize}
In this section we provide the above data; the explicit isomorphisms $c_{\textrm{l}}, c_{\textrm{r}}$ are constructed in \cite[Ch.\,4]{FlavioThesis}. 

\bigskip 

\noindent 
(D1) 
The \textsl{dual} of $W \equiv (\Bbbk[x],W)$ is $(\Bbbk[x], -W) \equiv W^* \equiv -W$. 

\bigskip 

\noindent 
(D2) 
The \textsl{adjunction 1-morphisms} exhibiting $-W$ as the (left) dual are the matrix factorisations 
\be 
\ev_W = I_W 
\quad \text{and} \quad 
\coev_W = I_W 
\ee 
of $W(x')-W(x)$, viewed as 1-morphisms $(-W) \sta W \to \mathds{I}$ and $\mathds{I} \to W \sta (-W)$, respectively. 

Note that $-W$ is also the right dual of~$W$, with $\tev_W = I_W$ and $\tcoev_W = I_W$ viewed as 1-morphisms $W\sta (-W) \to \mathds{I}$ and $\mathds{I} \to (-W) \sta W$. 

\begin{proposition}
\label{prop:LGdhasduals}
The data (D1)--(D2) endow the monoidal bicategory $\LG$ with duals for every object. 
\end{proposition}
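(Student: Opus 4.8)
The plan is to confirm that the data (D1)--(D2) meet the definition of a dual object in the monoidal bicategory $\LG$, i.e.\ that the cusp 2-isomorphisms $c_{\textrm{l}}$ and $c_{\textrm{r}}$ exist and obey the attendant (swallowtail) coherence axioms. The first step is to type-check the adjunction 1-morphisms. Because $\sta$ adds potentials on objects, $(-W)\sta W$ carries the potential $-W(x)+W(z)$, so that $\LG\big((-W)\sta W,\mathds{I}\big)=\hmf\big(\Bbbk[x,z],W(x)-W(z)\big)^{\oplus}$; since $I_W$ is by construction a finite-rank matrix factorisation of $W(x')-W(x)$, the assignment $\ev_W=I_W$ is well-typed after the renaming of variables of Remark~\ref{rem:extracare}, and the same applies verbatim to $\coev_W$, $\tev_W$ and $\tcoev_W$. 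This step is pure bookkeeping of potentials and variable labels.

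The substance is the construction of $c_{\textrm{l}}$ and $c_{\textrm{r}}$. The decisive observation is that \emph{every} 1-morphism entering the two snake composites of (D2) is a copy of a unit: the 1-morphism components of $a$, $l$ and $r$ are units by (M3) and (M5) --- e.g.\ $a_{((W,W^*),W)}=I_{W+W^*+W}=I_W$ --- and $\ev_W=\coev_W=I_W$ as well. Hence each snake composite is a horizontal composite of unit matrix factorisations over suitable polynomial rings, which I would contract down to $I_W$ by iterated application of the 1-morphism unitors $\lambda,\rho$ of~\eqref{eq:lambdarho} together with the structure isomorphisms $\sta_{(-)}$ of~(M1). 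The resulting $c_{\textrm{l}},c_{\textrm{r}}$ are therefore canonical composites of $\lambda$, $\rho$ and the $\sta$-maps, all of which are 2-isomorphisms, so invertibility is automatic.

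The hard part is not the existence of $c_{\textrm{l}},c_{\textrm{r}}$ but the verification that they satisfy the swallowtail equations relating the cusps to the associator, unitors and their coherence data (M4)--(M6). I expect this to reduce to a lengthy but mechanical diagram chase: expanding both sides into explicit actions on the wedge-and-tensor bases of the units $I_W$ and matching the Koszul signs generated by~\eqref{eq:deltadifferential} and by the unitor formulas of Section~\ref{subsec:LGdef}. Since all the underlying 1- and 2-morphisms are built from units, I anticipate no conceptual obstruction, only sign- and basis-level bookkeeping; this is precisely the computation carried out in \cite[Ch.\,4]{FlavioThesis}, which I would cite for the full coherence check.
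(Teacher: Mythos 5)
Your construction is essentially the paper's own proof: there, too, the cusp 2-isomorphisms $c_{\textrm{l}}$ and $c_{\textrm{r}}$ are obtained by contracting the snake composites of unit matrix factorisations via the unitors $\lambda,\rho$ and canonical swap/reordering maps, with the explicit computation deferred to \cite[Lem.\,4.6]{FlavioThesis}. One small remark: the definition (D2) only demands the \emph{existence} of the 2-isomorphisms $c_{\textrm{l}}, c_{\textrm{r}}$, with no swallowtail axioms imposed, so your final paragraph on coherence is not needed for this proposition (and in any case a dual pair in a monoidal bicategory can always be upgraded to a coherent one by modifying a cusp isomorphism).
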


\begin{proof}
The \textsl{cusp isomorphisms} $c_{\textrm{l}}, c_{\textrm{r}}$ are computed in terms of the unitors $\lambda, \rho$ and canonical swap maps in \cite[Lem.\,4.6]{FlavioThesis}. 
\end{proof}

\medskip 

Recall that an object~$A$ of a symmetric monoidal bicategory~$\B$ is \textsl{fully dualisable} if~$A$ has a dual and if the corresponding adjunction 1-morphisms $\ev_A$, $\coev_A$ themselves have left 
%arxiv_v2: 
	%and right adjoints. 
	 and right adjoints, which in turn have left and right adjoints, and so on. 
Hence Proposition~\ref{prop:LGdhasduals} together with Theorem~\ref{thm:LGsymmetricmonoidal} implies: 

\begin{corollary}
\label{cor:LGfullydualisable}
Every object of $\LG$ is fully dualisable. 
\end{corollary}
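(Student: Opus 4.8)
The plan is to reduce the corollary to the two results already established in this section, namely Theorem~\ref{thm:LGsymmetricmonoidal} and Proposition~\ref{prop:LGdhasduals}, by unwinding the definition of full dualisability. Recall that an object~$A$ of a symmetric monoidal bicategory is fully dualisable precisely when (i)~$A$ has a dual~$A^*$, and (ii)~the evaluation and coevaluation 1-morphisms $\ev_A$ and $\coev_A$ both admit left and right adjoints. Thus the argument is a two-line syllogism once both ingredients are in place: condition~(i) is exactly the content of Proposition~\ref{prop:LGdhasduals}, which supplies the dual $W^* = -W$ together with the adjunction 1-morphisms $\ev_W = I_W$ and $\coev_W = I_W$; and condition~(ii) follows from Theorem~\ref{thm:LGsymmetricmonoidal}, which guarantees that \emph{every} 1-morphism in $\LG$ has both a left and a right adjoint.

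Concretely, I would first invoke Proposition~\ref{prop:LGdhasduals} to conclude that an arbitrary object $W \equiv (\Bbbk[x],W) \in \LG$ has a dual, witnessed by the data (D1)--(D2). This produces in particular the specific 1-morphisms $\ev_W, \coev_W \in \LG(\cdots)$. Second, I would observe that $\ev_W$ and $\coev_W$ are ordinary 1-morphisms of the bicategory $\LG$, so Theorem~\ref{thm:LGsymmetricmonoidal} applies to them verbatim: each of $\ev_W$ and $\coev_W$ has a left adjoint and a right adjoint, given by the shifted duals of~\eqref{eq:Xdual} together with the adjunction 2-morphisms~\eqref{eq:4adjumaps}. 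Combining these two facts verifies both clauses of the definition of full dualisability for the arbitrarily chosen~$W$, and since~$W$ was arbitrary, every object of $\LG$ is fully dualisable.

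The only point requiring a moment's care is that the definition of full dualisability in a symmetric monoidal bicategory stops at this finite level: one needs adjoints for the \emph{evaluation and coevaluation 1-morphisms themselves}, but one does \emph{not} additionally require adjoints for the adjunction 2-morphisms (there is no data at 2-morphism level to dualise, as all 2-morphisms are isomorphisms in the relevant sense, or the bicategory is truncated so that higher dualisability is automatic). Hence there is no infinite regress to control, and the finiteness condition terminates exactly at the data already constructed. I do not anticipate a genuine obstacle here; the substance of the corollary resides entirely in the two theorems it cites, and the proof is simply the bookkeeping that matches the definition against those inputs.

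\begin{proof}
By Proposition~\ref{prop:LGdhasduals}, every object $W \in \LG$ has a dual $W^* = -W$, together with adjunction 1-morphisms $\ev_W$ and $\coev_W$. By Theorem~\ref{thm:LGsymmetricmonoidal}, every 1-morphism in $\LG$ admits left and right adjoints; applying this to $\ev_W$ and $\coev_W$ shows that these 1-morphisms are themselves both left- and right-adjointable. This verifies both conditions in the definition of full dualisability, and since $W$ was arbitrary, the claim follows.
\end{proof}
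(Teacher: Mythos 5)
Your proof is correct and matches the paper's argument exactly: the paper derives the corollary in one line by combining Proposition~\ref{prop:LGdhasduals} (duals for objects, with $\ev_W = \coev_W = I_W$) with Theorem~\ref{thm:LGsymmetricmonoidal} (adjoints for \emph{all} 1-morphisms), precisely as you do. Your closing observation that full dualisability in a bicategory terminates at adjoints for the (co)evaluation 1-morphisms is also the definition the paper uses, so there is no gap.
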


\subsection{Graded matrix factorisations}
\label{subsec:gradedLG} 

Landau-Ginzburg models with an additional $\Q$- or $\Z$-grading appear naturally as (non-functorial) quantum field theories, in their relation to conformal field theories, as well as in representation theory and algebraic geometry. 
In this section we recall the \textsl{bicategory of graded Landau-Ginzburg models} $\LGgr$ from \cite{cm1208.1481, CRCR, MurfetIPMUNotes} (see also \cite{bfk1105.3177v3}) and observe that it inherits the symmetric monoidal structure from $\LG$. 
Moreover, every object in $\LGgr$ is fully dualisable. 

\bigskip

An object of $\LGgr$ is a pair $(\Bbbk[x_1,\dots,x_n], W)$ where now $\Bbbk[x_1,\dots,x_n]$ is a graded ring by assigning degrees $|x_i| \in \Q_{>0}$ to the variables~$x_i$, and $W \in \Bbbk[x_1,\dots,x_n]$ is either zero or a potential of degree~2. 
The \textsl{central charge} of $W \equiv (\Bbbk[x_1,\dots,x_n], W)$ is the numerical invariant 
\be 
\label{eq:centralcharge}
c(W) = 3 \sum_{i=1}^n \big( 1- |x_i| \big) \, . 
\ee 

A 1-morphism $(\Bbbk[x],W) \to (\Bbbk[z],V)$ in $\LGgr$ is a summand of a finite-rank matrix factorisation $(X,d_X)$ of $V-W$ over $\Bbbk[x,z]$ such that the following four conditions are satisfied: 
(i) 
the modules 
%arxiv_v2: 
	%$X^0 = \bigoplus_{g\in \Q} X^0_q$ and $X^1= \bigoplus_{g\in \Q} X^1_q$ 
	 $X^0 = \bigoplus_{q\in \Q} X^0_q$ and $X^1= \bigoplus_{q\in \Q} X^1_q$ 
are $\Q$-graded, 
(ii) 
the action of~$x_i$ and~$z_j$ on~$X$ are respectively of $\Q$-degree~$|x_i|$ and~$|z_j|$, 
(iii) 
the map~$d_X$ has $\Q$-degree~1, and 
(iv) 
if we write $\{-\}$ for the shift in $\Q$-degree and if $X^i \cong \bigoplus_{q\in\Q} \Bbbk[x,z]\{q\}^{\oplus a_{i,q}}$ for $i \in \{0,1\}$, then $\{ q\in\Q \,|\, a_{i,q} \neq 0 \}$ must%
\footnote{%
Without condition (iv) we still obtain a bicategory $\LGhuge$, with the same structures that we exhibit here for $\LGgr$. 
As explained in \cite[Lecture 3]{MurfetIPMUNotes}, the Hom categories $\LGhuge(W,V)$ are equivalent to infinite direct sums of $\LGgr(W,V)$ with itself, hence we can restrict to $\LGgr$. 
}
 be a subset of $i + G_{V-W}$, 
where 
\be 
G_{V-W} := \big\langle |x_1|, \dots, |x_n|, |z_1|, \dots, |z_m| \big\rangle \subset \Q 
\quad \text{and} \quad 
G_0 := \Z \, . 
\ee 
A 2-morphism in $\LGgr$ between two 1-morphisms $(X,d_X), (X',d_{X'})$ is a cohomology class of $\Z_2$- and $\Q$-degree~0 with respect to the differential $\delta_{X,X'}$ in \eqref{eq:deltadifferential}. 

We continue to write $[-]$ for the $\Z_2$-grading shift and $\{-\}$ for the $\Q$-grading shift. 
Translating \cite[Thm.\,2.15]{kst0511155} into our conventions we see that $\LGgr(W,V)$ has the structure of a triangulated category with \textsl{shift functor} 
\be 
\label{eq:shiftfunctorgraded}
\Sigma := [1]\{1\} \, . 
\ee 	

Since the categories $\LGgr(W,V)$ are idempotent complete (cf.\ \cite[Lem.\,2.11]{kst0511155}) the construction of \cite{dm1102.2957} ensures that horizontal composition in $\LGgr$ can be defined analogously to \eqref{eq:horizfunctor}. 
Moreover, the units~$I_W$ of $\LG$ can naturally be endowed with an appropriate $\Q$-grading (by setting $|\theta_i| = |x_i| - 1$ and $|\theta_i^*| = 1 - |x_i|$), and the associator~$\alpha$ and unitors $\lambda, \rho$ of $\LGgr$ are those of $\LG$ (as they manifestly have $\Q$-degree~0). 
Hence $\LGgr$ is indeed a bicategory. 

\medskip 

The bicategory $\LGgr$ also inherits a symmetric monoidal structure from $\LG$. 
This is so because all 1- and 2-morphisms in the data (M1)--(M6), (S1)--(S3) are constructed from the units~$I_W$ and from the structure maps $\alpha, \lambda, \rho$, their inverses and ($\Q$-degree~0) swapping maps, respectively. 

\medskip 

For a 1-morphism 
\be 
X\in \LGgr\big((\Bbbk[x_1,\dots,x_n], W), (\Bbbk[z_1,\dots,z_m],V)\big)
\ee 
we define its left and right adjoint as 
\be 
\label{eq:Xgradedadjoints}
\dX = X^\vee[m]\{ \tfrac{1}{3} c(V) \}
\, , \quad 
X^\dagger = X^\vee[n]\{ \tfrac{1}{3} c(W) \} \, . 
\ee 
The above shifts in $\Q$-degree are necessary to render the adjunction maps $\ev_X, \coev_X, \tev_X, \tcoev_X$ in \eqref{eq:4adjumaps} and \eqref{eq:evalcoevX} to be of $\Q$-degree~0 so that they are indeed 2-morphisms in $\LGgr$. 
Finally, the (left and right) dual of $(\Bbbk[x], W) \in \LGgr$ is $(\Bbbk[x], -W)$ with the same grading, and the matrix factorisation underlying the adjunction 1-morphisms $\ev_W, \coev_W, \tev_W, \tcoev_W$ is again~$I_W$, but now viewed as a $\Q$-graded matrix factorisation. 

\medskip 

In summary, we have: 

\begin{theorem}
The bicategory $\LGgr$ inherits a symmetric monoidal structure from $\LG$, every object of $\LGgr$ has a dual, and every 1-morphism has adjoints. 
\end{theorem}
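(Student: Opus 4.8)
The plan is to deduce all three assertions from the ungraded results already established for $\LG$ --- the monoidal structure of Section~\ref{subsec:monoidalLG}, the symmetric structure of Section~\ref{subsec:symmetricLG}, Proposition~\ref{prop:LGdhasduals}, and Theorem~\ref{thm:LGsymmetricmonoidal} --- by exhibiting each piece of structural data as a $\Q$-graded refinement of its ungraded counterpart. The organising device is the grading-forgetting assignment $U\colon \LGgr \to \LG$, which on each Hom category is a faithful functor identifying the $2$-morphisms of $\LGgr(W,V)$ with the $\Q$-degree-$0$ $2$-morphisms of $\LG(W,V)$. Every axiom at issue --- the coherence axioms of a symmetric monoidal bicategory, the duality axioms (D2), and the adjunction (Zorro) axioms for (A1)--(A2) --- is an \emph{equation between parallel $2$-morphisms} that already holds in $\LG$; since the $\LGgr$-data map under $U$ to the corresponding $\LG$-data, faithfulness of $U$ yields each such equation in $\LGgr$ \emph{provided} every structural $1$- and $2$-morphism lifts to a legitimate datum there. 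This reduces the theorem to a sequence of $\Q$-degree checks.

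First I would dispatch the monoidal and symmetric data together with the duals of objects, where no new computation arises. One verifies that the unit $I_W$, graded by $|\theta_i| = |x_i|-1$ and $|\theta_i^*| = 1-|x_i|$, is a genuine object of $\LGgr(W,W)$: each summand of $d_{I_W}$ is $\Q$-homogeneous of degree~$1$, since $(x_i'-x_i)\theta_i^*$ has degree $|x_i| + (1-|x_i|)$ and $\partial^{x',x}_{[i]}W\cdot\theta_i$ has degree $(2-|x_i|) + (|x_i|-1)$, both equal to~$1$, in accordance with condition~(iii). As $\sta$ is $\otimes_\Bbbk$ with additive grading, and the associator, the unitors $\lambda,\rho$, and all reordering and swap maps have $\Q$-degree~$0$, every datum in (M1)--(M6) and (S1)--(S3) is assembled from $\Q$-graded ingredients and hence descends. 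The object duals (D1)--(D2) are handled in the same way: the dual of $(\Bbbk[x],W)$ is $(\Bbbk[x],-W)$ with unchanged grading, the $1$-morphisms $\ev_W,\coev_W,\tev_W,\tcoev_W$ are the graded $I_W$, and the cusp isomorphisms $c_{\mathrm l},c_{\mathrm r}$ of Proposition~\ref{prop:LGdhasduals} are composites of $\lambda,\rho$ and swaps.

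The crux, which I expect to be the main obstacle, is verifying that the shifts in~\eqref{eq:Xgradedadjoints} are precisely what render the four adjunction maps of~\eqref{eq:evalcoevX} of $\Q$-degree~$0$, so that they qualify as $2$-morphisms of $\LGgr$. Here I would run a careful degree count through the residue formulas, using that $\partial_{z_j}d_X$ has $\Q$-degree $1-|z_j|$, so that $\Lambda^{(z)}$ has degree $\sum_j(1-|z_j|) = \tfrac13 c(V)$ (and symmetrically $|\Lambda^{(x)}| = \tfrac13 c(W)$); that the difference quotients $\partial^{x,x'}_{[a]}d_X$ have degree $1-|x_a|$; that $\theta_a$ contributes $|x_a|-1$; and that the graded residue $\Res\big[\tfrac{g\,\mathrm{d}z}{\partial_{z_1}V,\dots,\partial_{z_m}V}\big]$ is $\Q$-homogeneous of degree $|g| + 2\sum_j|z_j| - 2m$. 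Assembling these, the image $\ev_X(e_i^*\otimes e_j)$ lands in $\Q$-degree $|e_j| - |e_i| - \tfrac13 c(V)$, which agrees with the degree of the input $e_i^*\otimes e_j$ exactly when $\dX$ carries the shift $\{\tfrac13 c(V)\}$ of~\eqref{eq:Xgradedadjoints}; this pins the shift down. The map $\tev_X$ is treated identically with the roles of $x$ and $z$ (and of $\Lambda^{(x)}$, $c(W)$) interchanged, while $\coev_X$ and $\tcoev_X$ follow from the same bookkeeping read backwards.

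Once $\Q$-degree-$0$-ness is secured, no adjunction identities remain to be checked: the Zorro equations hold in $\LG$ by Theorem~\ref{thm:LGsymmetricmonoidal}, and faithfulness of $U$ transports them to $\LGgr$. A final, minor point to confirm along the way is that the adjoints~\eqref{eq:Xgradedadjoints} and the monoidal products of objects stay inside $\LGgr$ rather than drifting into the larger $\LGhuge$ of the footnote --- that is, that finiteness and the support condition~(iv) are preserved --- which follows from the corresponding facts in $\LG$ together with idempotent completeness (cf.\ \cite[Lem.\,2.11]{kst0511155}). I expect the residue degree count to be the only genuinely load-bearing computation; all remaining content is inheritance through $U$.
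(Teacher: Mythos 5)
Your proposal is correct and follows essentially the same route as the paper: the theorem there is a summary of the preceding discussion, which likewise grades the unit $I_W$ via $|\theta_i|=|x_i|-1$, $|\theta_i^*|=1-|x_i|$, observes that all data in (M1)--(M6), (S1)--(S3) and (D1)--(D2) are built from $I_W$, the structure maps $\alpha,\lambda,\rho$ and $\Q$-degree-$0$ swaps, and introduces the shifts in~\eqref{eq:Xgradedadjoints} precisely so that the adjunction maps~\eqref{eq:evalcoevX} become $\Q$-degree~$0$. Your explicit residue degree count (and the faithful grading-forgetting functor as bookkeeping for transporting the coherence and Zorro equations) is exactly the verification the paper leaves implicit, and your computation is accurate.
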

 
\begin{corollary}
\label{cor:LGgrfullydual}
Every object of $\LGgr$ is fully dualisable. 
\end{corollary}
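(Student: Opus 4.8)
The plan is to obtain this as a formal consequence of the preceding Theorem, exactly parallel to how Corollary~\ref{cor:LGfullydualisable} follows from Theorem~\ref{thm:LGsymmetricmonoidal} and Proposition~\ref{prop:LGdhasduals} in the ungraded case. Recall that an object of a symmetric monoidal bicategory is fully dualisable precisely when it has a dual and the associated adjunction 1-morphisms $\ev_W$ and $\coev_W$ admit both left and right adjoints. Thus there are only two things to supply: the dual datum, and the adjoint datum for the two evaluation/coevaluation 1-morphisms.

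First I would invoke the first two clauses of the preceding Theorem. These assert that $\LGgr$ carries a symmetric monoidal structure and that every object $W$ has a dual $W^* = -W$, witnessed by the $\Q$-graded 1-morphisms $\ev_W = I_W$ and $\coev_W = I_W$ together with the cusp isomorphisms inherited from $\LG$. In particular $\ev_W$ and $\coev_W$ are bona fide 1-morphisms of $\LGgr$, so it makes sense to ask for their adjoints inside this bicategory.

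Next I would apply the third clause of the Theorem, which guarantees that every 1-morphism of $\LGgr$ has left and right adjoints, given on underlying objects by the shifted duals in~\eqref{eq:Xgradedadjoints} with adjunction 2-morphisms as in~\eqref{eq:4adjumaps}--\eqref{eq:evalcoevX}. Specialising to the 1-morphisms $\ev_W$ and $\coev_W$ immediately produces the required adjoints, and by the definition recalled above this exhibits $W$ as fully dualisable.

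The one point that deserves care --- and where the real work resides --- is not in the corollary itself but in the third clause of the Theorem: one must check that the left/right adjoint of a graded 1-morphism again lies in $\LGgr$, i.e.\ that the $\Q$-degree shifts $\{\tfrac13 c(V)\}$ and $\{\tfrac13 c(W)\}$ built into~\eqref{eq:Xgradedadjoints} are exactly those needed to render all four adjunction maps of $\Q$-degree zero. Since the Theorem already records this, the corollary reduces to the observation that $\ev_W$ and $\coev_W$ are themselves such 1-morphisms, so no further computation is required.
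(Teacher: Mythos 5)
Your proposal is correct and matches the paper's (implicit) argument exactly: the corollary is stated without further proof because it follows formally from the preceding Theorem, just as Corollary~\ref{cor:LGfullydualisable} follows from Proposition~\ref{prop:LGdhasduals} together with Theorem~\ref{thm:LGsymmetricmonoidal} in the ungraded case. You also correctly locate where the actual content lies, namely in the $\Q$-degree shifts of~\eqref{eq:Xgradedadjoints} making the adjunction maps~\eqref{eq:4adjumaps} degree-zero 2-morphisms of $\LGgr$, which the Theorem already records.
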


\section{Extended TQFTs with values in $\boldsymbol{\LG}$ and $\boldsymbol{\LGgr}$}
\label{sec:extendedTQFTs}

In this section we study extended TQFTs with values in $\LG$ and $\LGgr$. 
We briefly review framed and oriented 2-1-0-extended TQFTs and their ``classification'' in terms of fully dualisable objects and trivialisable Serre automorphisms, respectively. 
Then we observe that every object $W \equiv (\Bbbk[x_1,\dots,x_n],W)$ in $\LG$ or $\LGgr$ gives rise to an extended framed TQFT (Proposition~\ref{prop:Wfullydual} and Remark~\ref{rem:gradedJacW1}), and we show precisely when~$W$ determines an oriented theory (Propositions~\ref{prop:LGexori} and~\ref{prop:LGgrexori}). 
We also show how the extended framed (or oriented) TQFTs recover the Jacobi algebras $\Jac_W$ as commutative (Frobenius) $\Bbbk$-algebras (Theorems~\ref{thm:JacW1} and~\ref{thm:JacW2}, Remark~\ref{rem:gradedJacW2}), and we explain how a construction of Khovanov and Rozansky can be recovered as a special case of the cobordism hypothesis (Example \ref{ex:KRTQFT}).

\subsection{Framed case}
\label{subsec:LGframed}

Recall from \cite[Sect.\,3.2]{spthesis} and \cite[Sect.\,5]{Pstragowski} that there is a symmetric monoidal bicategory $\Bordfr$ of framed 2-bordisms. 
Its objects, 1- and 2-morphisms are, roughly, disjoint unions of 2-framed points~$+$ and~$-$, 2-framed 1-manifolds with boundary and (equivalence classed of) 2-framed 2-manifolds with corners. 
For any symmetric monoidal bicategory~$\B$, the cobordism hypothesis, originally due to \cite{BDpaper}, describes the 2-groupoid $\textrm{Fun}_{\textrm{sym}\,\otimes}(\Bordfr, \B)$ (of symmetric monoidal 2-functors $\zz \colon \Bordfr \to \B$, their symmetric monoidal pseudonatural transformations and modifications) in terms of data internal to~$\B$ that satisfy certain finiteness conditions. 
Objects of $\textrm{Fun}_{\textrm{sym}\,\otimes}(\Bordfr, \B)$ are called \textsl{extended framed TQFTs} with values in~$\B$. 

To formulate the precise statement of the cobordism hypothesis, denote by~$\Bfd$ the full subbicategory of~$\B$ whose objects are fully dualisable, and write $\mathscr{K}(\Bfd)$ for the \textsl{core} of~$\Bfd$, i.\,e.\ the subbicategory of~$\Bfd$ with the same objects and whose 1- and 2-morphisms are the equivalences and 2-isomorphisms of~$\B$, respectively. 
Then: 

\begin{theorem}[Cobordism hypothesis for framed 2-bordisms, {\cite[Thm.\,8.1]{Pstragowski}}]
\label{thm:CHframed}
Let~$\B$ be a symmetric monoidal bicategory. 
There is an equivalence 
\begin{align}
\textrm{Fun}_{\textrm{sym}\,\otimes} \big( \!\Bordfr, \B \big) & \stackrel{\cong}{\lra} \mathscr{K}(\Bfd) \, , \nonumber
\\ 
\zz & \lmt \zz(+) \, . 
\end{align}
\end{theorem}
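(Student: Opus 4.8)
The plan is to recognise $\Bordfr$ as the \emph{free} symmetric monoidal bicategory generated by a single fully dualisable object, namely the framed point $+$. Once that universal property is in place, the evaluation 2-functor $\zz \mapsto \zz(+)$ is an equivalence essentially by construction. I would therefore organise the argument into two parts: (I) verifying that $+$ is fully dualisable in $\Bordfr$, so that evaluation at $+$ genuinely lands in $\mathscr{K}(\Bfd)$; and (II) proving that evaluation is an equivalence of 2-groupoids, which is the substance of the universal property. For (I), the duality and adjunction data of $+$ are read off geometrically: its dual is the oppositely framed point $-$, the bent $1$-dimensional bordisms (``cup'' and ``cap'') play the roles of $\ev$ and $\coev$, and the elementary $2$-dimensional bordisms (saddles and cusped discs) furnish the adjunctions of these $1$-morphisms. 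Since a symmetric monoidal $2$-functor preserves all such duality data, $\zz(+)$ inherits full dualisability; and because transformations and modifications in the source $2$-groupoid are invertible, they are sent to equivalences and $2$-isomorphisms, so evaluation factors through the core $\mathscr{K}(\Bfd)$ as claimed.

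For part (II), I would treat essential surjectivity and local ffull faithfulness separately. Essential surjectivity asks that every fully dualisable $A \in \B$ be of the form $\zz_A(+)$: here one defines $\zz_A$ on the generating point by $+ \mapsto A$ and on the generating duality/adjunction bordisms using the chosen full-dualisability data of $A$, then extends along the symmetric monoidal structure. Full dualisability of $A$ is precisely the input guaranteeing that each generator acquires a well-defined image; this is where the finiteness condition does all the work. Local fullness and faithfulness amounts to showing that for any two extended TQFTs $\zz, \zz'$, evaluation at $+$ induces an equivalence from the groupoid of symmetric monoidal transformations and modifications $\zz \to \zz'$ to the groupoid of equivalences $\zz(+) \to \zz'(+)$ with $2$-isomorphisms between them. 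I would argue that a transformation is determined, up to a unique invertible modification, by its $1$-equivalence component at $+$ (its components at the remaining generators being forced, up to coherent isomorphism, by pseudonaturality and compatibility with the duality data), and likewise that a modification is determined by its component at $+$.

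The crux of the whole argument, and the step I expect to be the main obstacle, is establishing the generators-and-relations presentation of $\Bordfr$ underlying the universal property: one must show not only that the bordisms above \emph{generate}, but that the \emph{framed cobordism relations} coincide exactly with the coherence relations defining a fully dualisable object. This is the genuinely topological heart. Morse theory supplies handle decompositions exhibiting the elementary bordisms as generators, while Cerf theory---the analysis of generic paths and homotopies between Morse functions, together with their births, deaths, and handle slides---supplies the relations, which then have to be matched against the swallowtail and cusp coherence axioms. The most delicate bookkeeping is keeping track of the $2$-framings throughout, so that the resulting free object is a \emph{framed} generator rather than a merely dualisable one. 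Rather than redo this Cerf-theoretic analysis, I would invoke the explicit presentations of $\Bordfr$ developed in \cite{spthesis, Pstragowski} and restrict my labour to the comparatively formal bicategorical arguments of parts (I) and (II).
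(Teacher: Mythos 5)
The paper itself contains no proof of this statement: it is imported wholesale from \cite[Thm.\,8.1]{Pstragowski}, whose argument in turn rests on the generators-and-relations presentation of $\Bordfr$ from \cite{spthesis}. Your outline is a faithful summary of precisely that argument --- the framed point made fully dualisable by elbows, saddles and cusp discs; evaluation landing in the core; essential surjectivity and full faithfulness from the universal property; Morse and Cerf theory supplying generators and relations --- and you explicitly defer the topological heart to the same two references the paper cites, so in substance you and the paper rely on the same source. Two points deserve sharpening if you were to execute part (II) rather than cite it. First, the presentation exhibits $\Bordfr$ as free not on a fully dualisable object as such but on a \emph{coherent fully dual pair}, i.e.\ duality and adjunction data satisfying the swallowtail and cusp relations. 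Consequently essential surjectivity is not merely ``each generator acquires a well-defined image'': an arbitrary choice of duality data for a fully dualisable $A$ need not satisfy those relations, and the genuinely non-formal bicategorical step in \cite{Pstragowski} is the upgrading lemma --- every dual pair can be corrected to a coherent one, and the forgetful 2-functor from coherent fully dual pairs to $\mathscr{K}(\Bfd)$ is an equivalence --- whereas your phrasing suggests full dualisability alone does all the work. Second, your claim that transformations and modifications in $\textrm{Fun}_{\textrm{sym}\,\otimes}(\Bordfr,\B)$ are invertible is true but is itself a lemma, deduced from the duality and adjunction structure of $\Bordfr$ (every object dualisable, every 1-morphism with adjoints), not part of the definition of that functor bicategory. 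With those two caveats made explicit, your plan coincides with the proof in the cited references.
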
 

Note that thanks to the description of $\Bordfr$ as a symmetric monoidal bicategory in terms of generators and relations given in \cite{Pstragowski}, the action of~$\zz$ is fully determined (up to coherent isomorphisms) by what it assigns to the point. 
For example, if $\zz(+) = A$, then the 2-framed circle which is the horizontal composite of the two semicircles (or \textsl{elbows}) ${}^\dagger\!\ev_{+}$ and $\ev_{+}$ is sent to $\ev_A \otimes\,{}^\dagger\!\ev_A$. 
Similarly, 2-morphisms in $\Bordfr$ can be decomposed into cylinders and adjunction 2-morphisms for $\ev_{+}, \coev_{+}$ and their (multiple) adjoints; we will discuss several examples of such decompositions in the proofs of Theorems~\ref{thm:JacW1} and~\ref{thm:JacW2} below. 

\medskip 

We now turn to the symmetric monoidal bicategory of Landau-Ginzburg models~$\LG$. 
As a direct consequence of the cobordism hypothesis and Corollary~\ref{cor:LGfullydualisable} we have: 

\begin{proposition}
\label{prop:Wfullydual} 
Every object $W \equiv (\Bbbk[x_1,\dots,x_n], W) \in \LG$ determines an extended framed TQFT 
\be 
\zzwfr \colon \Bordfr \lra \LG 
\quad \text{ with } \quad \zzwfr(+) = W \, . 
\ee 
\end{proposition}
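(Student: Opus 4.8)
The plan is to obtain the statement as an immediate consequence of the cobordism hypothesis, since all the substantive work has already been carried out in Section~\ref{subsec:LGduals}. First I would invoke Corollary~\ref{cor:LGfullydualisable}, which guarantees that the object $W \equiv (\Bbbk[x_1,\dots,x_n], W)$ is fully dualisable; equivalently, $W$ lies in the full subbicategory $\Bfd \subset \LG$ of fully dualisable objects, and hence is an object of its core $\mathscr{K}(\Bfd)$.

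Next I would apply Theorem~\ref{thm:CHframed} with target $\B = \LG$. That theorem supplies an equivalence of 2-groupoids $\textrm{Fun}_{\textrm{sym}\,\otimes}(\Bordfr, \LG) \xrightarrow{\cong} \mathscr{K}(\Bfd)$, implemented by evaluation at the point, $\zz \mapsto \zz(+)$. Since any equivalence of 2-groupoids is biessentially surjective, every object of $\mathscr{K}(\Bfd)$ lies in its essential image; in particular $W$ does. Choosing a preimage then yields a symmetric monoidal 2-functor $\zzwfr \colon \Bordfr \to \LG$ -- that is, an extended framed TQFT -- together with an equivalence $\zzwfr(+) \cong W$.

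The only point deserving a moment's care is the reading of ``$\zzwfr(+) = W$'' in the statement: biessential surjectivity produces a TQFT whose value on $+$ is \emph{equivalent} rather than literally equal to $W$. Since the equivalences in $\mathscr{K}(\Bfd)$ are precisely the adjoint equivalences of $\LG$, one may transport $\zzwfr$ along this equivalence (or simply interpret the equality up to the resulting canonical isomorphism) to arrange the normalisation $\zzwfr(+) = W$ on the nose.

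Accordingly, I do not expect any genuine obstacle in the deduction itself: the entire mathematical content of the proposition is already packaged into Corollary~\ref{cor:LGfullydualisable} (whose proof draws on the adjunction 2-morphisms of \cite{cm1208.1481} recalled in (A1)--(A2) and on the object-duals (D1)--(D2)) and into the cobordism hypothesis of Theorem~\ref{thm:CHframed}. The proof is therefore essentially a one-line combination of these two inputs, and I would present it as such.
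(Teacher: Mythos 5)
Your proposal is correct and takes essentially the same route as the paper, which states the proposition as a direct consequence of the cobordism hypothesis (Theorem~\ref{thm:CHframed}) together with Corollary~\ref{cor:LGfullydualisable} --- precisely the two inputs you combine via essential surjectivity of the equivalence $\zz \mapsto \zz(+)$. Your additional remark about reading $\zzwfr(+) = W$ up to the canonical equivalence in $\mathscr{K}(\Bfd)$ is a harmless (and sensible) clarification of the same one-line deduction.
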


This can be interpreted as ``every Landau-Ginzburg model can be extended to the point as a framed TQFT''. 
In the remainder of Section~\ref{subsec:LGframed} we make this more precise by relating $\zzwfr$ to the (non-extended) closed oriented TQFT 
\be 
\zz_W \colon \Bord_{2,1}^{\textrm{or}} \lra \Vectk 
\ee
which via the standard classification in terms of commutative Frobenius algebras (see e.\,g.\ \cite{Kockbook}) is described by the Jacobi algebra $\Jac_W$ with pairing 
\be 
\label{eq:residuepairing}
\langle -,- \rangle_W \colon \Jac_W \otimes_{\Bbbk} \Jac_W \lra \Bbbk 
\ee 
induced by the \textsl{residue trace map} 
\be
\label{eq:residuemap}
\Jac_W \lra \Bbbk 
\, , \quad 
\phi \lmt \Res \left[ \frac{ \phi \, \operatorname{d}\!x }{\partial_{x_1}W, \ldots, \partial_{x_n} W} \right] 
	=: \langle \phi \rangle_W \, , 
\ee 
i.\,e.\ $\langle \phi, \psi \rangle_W = \langle \phi\psi \rangle_W$. 

\medskip 

To recover the $\Bbbk$-algebra $\Jac_W$ with its multiplication $\mu_{\Jac_W} \colon \phi\otimes\psi \mapsto \phi\psi$, we want to show that $\Jac_W$ and $\mu_{\Jac_W}$ are what~$\zzwfr$ assigns to ``the'' circle and ``the'' pair-of-pants. 
However, there are infinitely many isomorphism classes of 2-framed circles (one for every integer), so we have to be more specific. 
Using the equivalent description of 2-framed circles in terms of immersions $\iota \colon S^1 \to \R^2$ together with a normal framing \cite[Sect.\,1.1]{dsps1312.7188}, the correct choice is to take the standard circle embedding for~$\iota$ together with outward pointing normals. 
We denote the corresponding 2-framed circle~$S^1_0$. 
In terms of the structure 1-morphisms of $\Bordfr$ (whose horizontal composition we write as~$\#$), we have (see \cite[Sect.\,1.2]{dsps1312.7188})
\be 
S^1_0 = \ev_+ \# \, {}^\dagger \! \ev_+ \, . 
\ee 
This is the correct choice in the sense that for every integer~$k$, there is a 2-framed circle~$S^1_k$, and for every pair $(k,l) \in \Z^2$ there is a pair-of-pants 2-morphism $S^1_k \sqcup S^1_l \to S^1_{k+l}$ in $\Bordfr$, and only for $k=0=l$ do we get a multiplication. This is ``the'' 2-framed pair-of-pants for us. 

\begin{theorem}
\label{thm:JacW1} 
For every $(\Bbbk[x_1,\dots,x_n], W) \in \LG$, we have that 
\be 
\Big(\zzwfr(S^1_0), \, \zzwfr(\textrm{pair-of-pants}) \Big)  
\ee 
is isomorphic to $(\Jac_W, \mu_{\Jac_W})$ as a $\Bbbk$-algebra. 
\end{theorem}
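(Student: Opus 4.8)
The plan is to reduce the evaluation of $\zzwfr$ on the circle and the pair-of-pants to an explicit calculation inside $\LG$, exploiting that $\zzwfr$ is a symmetric monoidal 2-functor with $\zzwfr(+) = W$ (Proposition~\ref{prop:Wfullydual}) together with the explicit adjunction data (A1)--(A2). Everything will live in $\End_\LG(\mathds{I}) \simeq \Vectk^{\Z_2}$, so the two halves of the statement amount to identifying one object of this category with the vector space $\Jac_W$ and one 2-morphism with the map $\mu_{\Jac_W}$.

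First I would compute $\zzwfr(S^1_0)$. Since $S^1_0 = \ev_+ \# {}^\dagger\!\ev_+$ and $\zzwfr$ sends horizontal composition~$\#$ to horizontal composition~$\otimes$ in $\LG$, while preserving the evaluation 1-morphism and its left adjoint up to coherent isomorphism (a 2-functor preserves adjunctions, which are unique up to canonical isomorphism), we obtain $\zzwfr(S^1_0) \cong \ev_W \otimes {}^\dagger\!\ev_W$ as a 1-endomorphism of $\mathds{I}$. By (A1)--(A2) the 1-morphism $\ev_W$ is the matrix factorisation $I_W$ and ${}^\dagger\!\ev_W$ is $I_W^\vee$ up to a shift, and the composite is their tensor product over the coordinate ring $\Bbbk[x,x']$ of the intermediate object $W^* \sta W$. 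Taking cohomology of this Koszul-type complex, equivalently feeding the explicit $\ev_W,\coev_W$ residue formulas of~\eqref{eq:evalcoevX} into the snake relations, collapses it to $\Jac_W$ placed in $\Z_2$-degree $n \bmod 2$; this identifies the underlying vector space of $\zzwfr(S^1_0)$ with $\Jac_W$, compatibly with the isomorphism $\End(I_W)\cong\Jac_W$.

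Next I would treat the multiplication. The pair-of-pants 2-morphism $S^1_0 \sqcup S^1_0 \to S^1_0$ decomposes in $\Bordfr$, up to cylinders (identity 2-morphisms), into the generating saddle 2-morphism together with the adjunction 2-morphisms for $\ev_+$ and its adjoints; I would fix such a decomposition explicitly, handling the saddle, cap and cup as intermediate steps. Applying $\zzwfr$ turns these generators into the adjunction 2-morphisms $\ev_X, \coev_X, \tev_X, \tcoev_X$ of~\eqref{eq:4adjumaps}, whose closed-form expressions are recorded in~\eqref{eq:evalcoevX}. Composing them and simplifying under the identification from the previous paragraph yields a bilinear map $\Jac_W \otimes_\Bbbk \Jac_W \to \Jac_W$, which I would then identify with polynomial multiplication $\mu_{\Jac_W}$, thereby upgrading the vector-space isomorphism to an isomorphism of $\Bbbk$-algebras.

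The main obstacle is this last identification: pushing the explicit residue and Koszul-sign formulas of~\eqref{eq:evalcoevX} through the tensor products and cohomology for the chosen saddle decomposition, and verifying that the resulting product is \emph{precisely} multiplication in $\Jac_W$ rather than merely some associative product, all while tracking the coherence isomorphisms $\lambda,\rho$ and $\sta_{(-,-)}$ that intervene when transporting data into $\End_\LG(\mathds{I})$. The circle computation is comparatively standard, being the statement that closing up the unit $I_W$ computes the Hochschild homology of $\hmf(W)$, namely $\Jac_W$; it is the bookkeeping of signs and residues in the pair-of-pants that carries the real content.
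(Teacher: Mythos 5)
Your strategy coincides with the paper's: evaluate $\zzwfr(S^1_0) \cong \ev_W \otimes\,{}^\dagger\!\ev_W$ inside $\End_\LG(\mathds{I})$, decompose the pair-of-pants into generators, and compute with the explicit formulas~\eqref{eq:evalcoevX}. But there are two concrete problems. First, your parity claim for the circle is wrong: you place $\Jac_W$ in $\Z_2$-degree $n \bmod 2$, whereas since $\ev_W \colon (-W)\sta W \to \mathds{I}$ has a target with \emph{zero} variables, \eqref{eq:Xdual} gives ${}^\dagger\!\ev_W = \ev_W^\vee[0] = I_W^\vee$ with no shift whatsoever, and $I_W \otimes_{\Bbbk[x,x']} I_W^\vee$ collapses to $\Jac_W$ concentrated in \emph{even} degree. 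What you describe is Hochschild homology, but the circle $S^1_0 = \ev_+ \# \,{}^\dagger\!\ev_+$ computes Hochschild \emph{cohomology}; it is $\textrm{HH}_\bullet(W) = \ev_W \otimes b_{(W,W)} \otimes \coev_W \cong \Jac_W[n]$ that carries the shift (Corollary~\ref{cor:HH}), because it is $\coev_W \cong I_W^\vee[n]$, not ${}^\dagger\!\ev_W$, that closes up that composite. For odd $n$ your claim would in fact contradict the theorem you are proving: a unital algebra isomorphic to $\Jac_W$ cannot be concentrated in odd $\Z_2$-degree.

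Second, the identification of the product with $\mu_{\Jac_W}$ --- which you rightly flag as the main obstacle --- is deferred rather than resolved, and the decomposition you sketch (``saddle, cap and cup as intermediate steps'') over-complicates it: cap and cup are only needed for the Frobenius refinement in Theorem~\ref{thm:JacW2}. The missing idea is that the 2-framed pair-of-pants is simply a whiskering of the upside-down saddle, $1_{\ev_+} \,\#\, \ev_{\ev_+} \,\#\, 1_{{}^\dagger\!\ev_+}$, so $\zzwfr$ sends it to $1_{\ev_W} \otimes \ev_{\ev_W} \otimes 1_{{}^\dagger\!\ev_W}$. Because the target of $\ev_W$ is $(\Bbbk,0)$, in the formula for $\ev_{\ev_W}$ in~\eqref{eq:evalcoevX} only the $l=0$ summand survives and the residue degenerates, giving $\ev_{\ev_W}(e_i^* \otimes e_j) = \delta_{i,j}$; combined with the fact that the isomorphism $\kappa \colon I_W \otimes_{\Bbbk[x,x']} I_W^\vee \to \Jac_W$, $p(x)q(x')\, e_i \otimes e_j^* \mapsto p(x)q(x)\,\delta_{i,j}$, is likewise diagonal, the induced product is immediately polynomial multiplication --- no sign or residue bookkeeping remains. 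So the ``real content'' you anticipate dissolves once the right decomposition is chosen; as written, your plan halts precisely at the step that carries the theorem.
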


\begin{proof}
Note first that $\zzwfr(S^1_0) \cong \ev_W \otimes\,{}^\dagger\!\ev_W = \ev_W \otimes (\ev_W^\vee[0]) = I_W \otimes_{\Bbbk[x,x']} I_W^\vee$ is isomorphic in $\LG( (\Bbbk,0), (\Bbbk,0) ) \cong \textrm{vect}^{\Z_2}$ to the vector space $\Jac_W$ (viewed as a $\Z_2$-graded vector space concentrated in even degree). 
One can check that an explicit isomorphism $\kappa \colon I_W \otimes_{\Bbbk[x,x']} I_W^\vee \cong \End_{\LG}(I_W) \cong \Jac_W$ is given by linear extension of $p(x) q(x') \cdot e_i \otimes e_j^* \mapsto p(x)q(x) \cdot \delta_{i,j}$, where~$p$ and~$q$ are polynomials and $\{ e_i \}$ is a basis of the $\Bbbk[x,x']$-module~$I_W$. 

Next we prove that $\zzwfr$ sends the pair-of-pants to the commutative multiplication $\mu_{\Jac_W}$. 
For this we decompose the pair-of-pants into generators, namely into cylinders over the left and right elbows $\ev_{+} \colon - \sqcup \,+ \to \emptyset$ and ${}^\dagger\!\ev_{+} \colon \emptyset \to - \sqcup +$, respectively, and the ``upside-down saddle'' $\ev_{\ev_{+}} \colon {}^\dagger\!\ev_{+} \# \ev_{+} \to 1_{- \sqcup +}$ (which is called~$v_1$ in \cite[Ex.\,1.1.7]{dsps1312.7188}). 
Then
\be 
\text{pair-of-pants} = 1_{\ev_{+}} \,\#\, \ev_{\ev_{+}} \,\#\; 1_{{}^\dagger\!\ev_{+}} 
\colon 
S^1_0 \sqcup S^1_0 \lra S^1_0 \, . 
\ee 
Hence 
%arxiv_v2: 
	if $\zzwfr(\ev_{\ev_+}) = \ev_{\ev_W}$, then the functor 
$\zzwfr$ sends this pair-of-pants to $1_{\ev_W} \otimes \ev_{\ev_W} \otimes 1_{{}^\dagger\!\ev_W}$, which by pre- and post-composition with the isomorphism $\kappa \colon \ev_W \otimes\,{}^\dagger\!\ev_W \cong \Jac_W$ becomes a map $\mu \colon \Jac_W \otimes_{\Bbbk} \Jac_W \to \Jac_W$. 
Noting that both~$\kappa$ and $\ev_{\ev_W}$ act diagonally (with $\ev_{\ev_W}(e_i^* \otimes e_j) = \delta_{i,j}$ since $\ev_W \colon (-W) \sta W \to \mathds{I}$ has trivial target, cf.\ the explicit expression for $\ev_{\ev_W}$ in~\eqref{eq:evalcoevX}), we find that~$\mu$ is indeed given by multiplication of polynomials, i.\,e.\ $\mu = \mu_{\Jac_W}$. 

%arxiv_v2: 
	To complete the proof we need to argue that~$\zzwfr$ assigns our choice of counit $\ev_{\ev_W}$ to the upside-down saddle $\ev_{\ev_{+}}$, and not some other choice of adjunction data. 
	By \cite[Thm.\,3.17 \& Thm.\,8.1]{Pstragowski}, extended framed TQFTs are equivalent to ``coherent fully dual pairs'' in their target bicategories, see \cite[Def.\,3.12]{Pstragowski} for the details. 
	For the algebra structure on $\zzwfr(S^1_0)$, we only need a ``coherent dual pair'' as defined in \cite[Def.\,2.6]{Pstragowski}. 
	One straightforwardly checks that $(W,-W,\ev_W,\coev_W, c_{\textrm{r}}, c_{\textrm{l}})$ satisfies all the defining properties of a coherent dual pair, ensuring that $\zzwfr$ can indeed be chosen such that $\zzwfr(\ev_{\ev_+}) = \ev_{\ev_W}$. 
	(The key defining properties of coherent dual pairs for us to check are the so-called swallowtail identities of \cite[Def.\,2.6]{Pstragowski}, which can be viewed as consistency constraints on our cusp isomorphism~$c_{\textrm{l}}$. 
	But since $c_{\textrm{l}}, c_{\textrm{r}}$ and all other 2-morphisms that appear in the swallowtail identities are structure maps of the underlying bicategory of $\LG$, the coherence theorem for bicategories guarantees that the constraints are satisfied.) 
\end{proof}

\begin{remark}
The finite-dimensional $\Bbbk$-algebra $\Jac_W$ is typically not separable. 
For example, if $W=x^{N+1}$ with $N\in \Z_{\geqslant 2}$ the algebra $\Jac_W$ has  non-semisimple representations (as multiplication by~$x$ has non-trivial Jordan blocks) and hence cannot be separable. 
Thus $\Jac_W$ is not fully dualisable in the bicategory $\textrm{Alg}_{\Bbbk}$ of finite-dimensional $\Bbbk$-algebras, bimodules and intertwiners \cite{l0905.0465, spthesis}, so $\Jac_W$ cannot describe an extended TQFT with values in $\textrm{Alg}_{\Bbbk}$.  
Proposition~\ref{prop:Wfullydual} and Theorem~\ref{thm:JacW1} explain how $\Jac_W$ does appear in an extended TQFT with values in $\LG$, namely as the algebra assigned to the circle~$S^1_0$ and its pair-of-pants. 
\end{remark} 

For an algebra $A \in \textrm{Alg}_\Bbbk$ its Hochschild cohomology $\textrm{HH}^\bullet(A)$ is isomorphic to $\ev_A \otimes\,{}^\dagger\!\ev_A$, and for Hochschild homology one finds $\textrm{HH}_\bullet(A) \cong \ev_A \otimes \, b_{(A,A)} \otimes \coev_A$. 
Similarly, for every object $W \equiv (\Bbbk[x_1,\dots,x_n], W) \in \LG$ we may define 
\be 
\textrm{HH}^\bullet(W) := \ev_W \otimes\,{}^\dagger\!\ev_W 
\, , \quad 
\textrm{HH}_\bullet(W) := \ev_W \otimes \, b_{(W,W)} \otimes \coev_W \, . 
\ee 
Thus by Theorem~\ref{thm:JacW1} we have $\textrm{HH}^\bullet(W) \cong \Jac_W$, and paralleling the first part of the proof we find $\textrm{HH}_\bullet(W) \cong \Jac_W[n]$ as $\Z_2$-graded vector spaces (because the matrix factorisations $b_{(W,W)}$ and $\coev_W$ are $I_W \sta I_W$ and $I_W \cong I_W^\dagger = I_W^\vee[n]$, respectively). 
Hence $\textrm{HH}^\bullet(W)$ and $\textrm{HH}_\bullet(W)$ precisely recover the Hochschild cohomology and homology of the 2-periodic differential graded category of matrix factorisations $\textrm{MF}(\Bbbk[x],W)$ as computed in \cite[Cor.\,6.5\,\&\,Thm.\,6.6]{d0904.4713}: 

\begin{corollary}
\label{cor:HH}
For every $W \equiv (\Bbbk[x], W) \in \LG$ we have  
\be 
\textrm{HH}^\bullet(W) \cong \textrm{HH}^\bullet \big(\textrm{MF}(\Bbbk[x],W)\big)
\, , \quad 
\textrm{HH}_\bullet(W) \cong \textrm{HH}_\bullet \big(\textrm{MF}(\Bbbk[x],W)\big) \, . 
\ee 
\end{corollary}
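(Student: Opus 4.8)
The plan is to deduce both isomorphisms by routing each side through a common identification with the Jacobi algebra $\Jac_W$. Half of this is already in place: the discussion preceding the corollary shows, using Theorem~\ref{thm:JacW1} for cohomology and the parallel computation for homology, that $\textrm{HH}^\bullet(W) \cong \Jac_W$ and $\textrm{HH}_\bullet(W) \cong \Jac_W[n]$ as $\Z_2$-graded vector spaces. It therefore remains only to match these against the Hochschild invariants of the 2-periodic differential graded category $\textrm{MF}(\Bbbk[x],W)$.

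For this I would invoke Dyckerhoff's explicit computations. His result \cite[Cor.\,6.5]{d0904.4713} identifies $\textrm{HH}^\bullet(\textrm{MF}(\Bbbk[x],W))$ with $\Jac_W$ concentrated in even $\Z_2$-degree, while \cite[Thm.\,6.6]{d0904.4713} identifies $\textrm{HH}_\bullet(\textrm{MF}(\Bbbk[x],W))$ with a copy of $\Jac_W$ carried by the top differential form $\operatorname{d}\!x_1 \wedge \cdots \wedge \operatorname{d}\!x_n$, hence placed in $\Z_2$-degree $n \bmod 2$; as a $\Z_2$-graded vector space this is exactly $\Jac_W[n]$. Composing the two pairs of isomorphisms then yields both claims at once.

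The only point that needs care is the bookkeeping of $\Z_2$-grading conventions, and in particular the origin of the degree-$n$ shift on the two sides. On our side this shift enters through the identification $\coev_W = I_W \cong I_W^\vee[n]$ used in the homology calculation recalled above, whereas on Dyckerhoff's side it is the homological degree of the volume form $\operatorname{d}\!x_1 \wedge \cdots \wedge \operatorname{d}\!x_n$. I expect this parity-matching to be the main (and essentially the only) obstacle; once the conventions are aligned, both identifications land on the same $\Z_2$-graded vector space and the asserted isomorphisms are immediate. If one additionally wants the cohomology isomorphism to respect the cup-product and Jacobi-algebra multiplication, this too follows, since Theorem~\ref{thm:JacW1} already identifies the relevant product on $\textrm{HH}^\bullet(W)$ with $\mu_{\Jac_W}$.
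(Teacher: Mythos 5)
Your proposal is correct and coincides with the paper's own argument: the text immediately preceding the corollary establishes $\textrm{HH}^\bullet(W) \cong \Jac_W$ via Theorem~\ref{thm:JacW1} and $\textrm{HH}_\bullet(W) \cong \Jac_W[n]$ via the identification $\coev_W \cong I_W^\vee[n]$, and the corollary then follows by citing exactly the Dyckerhoff computations \cite[Cor.\,6.5\,\&\,Thm.\,6.6]{d0904.4713} you invoke, with the same parity bookkeeping (the volume form $\operatorname{d}\!x_1 \wedge \cdots \wedge \operatorname{d}\!x_n$ accounting for the shift $[n]$ on the homology side). Your additional remark about compatibility of the multiplication via Theorem~\ref{thm:JacW1} goes slightly beyond what the paper claims, which states the isomorphisms only as $\Z_2$-graded vector spaces, but it is consistent with the surrounding results.
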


\begin{remark}
\label{rem:gradedJacW1}
Proposition~\ref{prop:Wfullydual}, Theorem~\ref{thm:JacW1} and Corollary~\ref{cor:HH} have direct analogues for the graded Landau-Ginzburg models of Section~\ref{subsec:gradedLG}. 
Firstly, Theorem~\ref{thm:CHframed} and Corollary~\ref{cor:LGgrfullydual} immediately imply that every object $(\Bbbk[x_1,\dots,x_n], W) \in \LGgr$ determines an extended TQFT 
\be
\zz_{W,\textrm{gr}}^{\textrm{fr}} \colon \Bordfr \lra \LGgr \, . 
\ee 

Secondly, going through the proof of Theorem~\ref{thm:JacW1} we see that to the circle~$S^1_0$ and its pair-of-pants, $\zz_{W,\textrm{gr}}^{\textrm{fr}}$ assigns the Jacobi algebra $\Jac_W$ which is now a $\Q$-graded algebra with degree-preserving multiplication. 
We note that here it is important that the upside-down saddle $\ev_{\ev_{+}} \colon {}^\dagger\!\ev_{+} \# \ev_{+} \to 1_{- \sqcup +}$ involves the \textsl{left} adjoint of $\ev_{+}$: by~\eqref{eq:Xgradedadjoints} we have ${}^\dagger\!\ev_{W} = \ev_W^\vee[0]\{0\} = \ev_W^\vee$, so $\zz_{W,\textrm{gr}}^{\textrm{fr}}(\text{pair-of-pants})$ really gives a map 
\be 
\Jac_W \otimes_{\Bbbk} \Jac_W \cong 
	\big( \ev_W \otimes\, {}^\dagger\!\ev_{W}\big) \sta \big( \ev_W \otimes\, {}^\dagger\!\ev_{W}\big)
	\lra 
	\big( \ev_W \otimes\, {}^\dagger\!\ev_{W}\big) \cong \Jac_W . 
\ee 
(Incorrectly using the right adjoint $\ev_W^\dagger = \ev_W^\vee [2n]\{ \tfrac{2}{3} c(W) \}$ would lead to unwanted $\Q$-degree shifts in the multiplication. 
In Remark~\ref{rem:gradedJacW2} below however we are naturally led to use the right adjoint $\ev_W^\dagger$ to obtain the correct graded trace map $\langle-\rangle_W$ on $\Jac_W$.) 

Thirdly, for every $(\Bbbk[x_1,\dots,x_n], W) \in \LGgr$ the matrix factorisation underlying $\coev_W$ is $I_W \cong I_W^\dagger = I_W^\vee[n]\{ \tfrac{1}{3} c(W) \} = I_W^\vee[n-2]\{ \tfrac{1}{3} c(W) \}$, and hence we have 
\be 
\textrm{HH}^\bullet(W) \cong \Jac_W
\, , \quad 
\textrm{HH}_\bullet(W) \cong \Jac_W[n-2]\{ \tfrac{1}{3} c(W) \} \, . 
\ee 
\end{remark}

\subsection{Oriented case}
\label{subsec:LGoriented}

An \textsl{extended oriented TQFT} with values in a symmetric monoidal bicategory~$\B$ is a symmetric monoidal 2-functor $\zz \colon \Bordor \to \B$. 
Here $\Bordor$ is the bicategory of oriented 2-bordisms defined and explicitly constructed in \cite[Ch.\,3]{spthesis}; see in particular Fig.~3.13 of loc.\ cit.\ for a list of the 2-morphism generators (to wit: the saddle, the upside-down saddle, the cap, the cup, and cusp isomorphisms) and their relations. 
Hence objects of $\Bordor$ are disjoint unions 
%arxiv_v2: 
	%fo 
	 of 
positively and negatively oriented points, which we (also) denote~$+$ and~$-$, respectively. 
It was argued in \cite{l0905.0465} that such 2-functors~$\zz$ are classified by the homotopy fixed points of the $\sotwo$-action induced on $\Bfd$ by the $\sotwo$-action which rotates the framings in $\Bordfr$. 
This was worked out in detail in \cite{HSV, HV, Hesse} as we briefly review next. 

An $\sotwo$-action on $\Bfd$ is a monoidal 2-functor~$\varrho$ from the fundamental 2-groupoid $\Pi_2(\sotwo)$ to the bicategory of autoequivalences of~$\Bfd$. 
Since $\sotwo$ is path-connected, $\Pi_2(\sotwo)$ has essentially a single object~$*$ which~$\varrho$ sends to the identity~$\textrm{Id}_{\Bfd}$ on~$\Bfd$. 
Since $\pi_1(\sotwo) \cong \Z$ the action of~$\varrho$ on 1-morphisms is essentially determined by its value on the identity~$1_*$ corresponding to $1\in\Z$. 
It was argued in \cite[Rem.\,4.2.5]{l0905.0465} that for an oriented TQFT~$\zz$ as above with $\zz(+) =: A$, the relevant choice for $\varrho(1_*)$ is the \textsl{Serre automorphism}~$S_A$ of $A\in \Bfd$. 
By definition~$S_A$ is the 1-morphism 
\be 
\label{eq:SerreAutom} 
S_A :=  
	r_{(A,*)} \otimes \big( 1_A \sta \tev_A \big) \otimes \big( b_{(A,A)} \sta 1_{A^*} \big) \otimes \big( 1_A \sta \tev_A^\dagger \big) \otimes r_A^- 
\colon A \lra A \, . 
\ee 
Here we denote the braiding, horizontal composition and monoidal product in~$\B$ by~$b$, $\otimes$ and~$\sta$, respectively, as we do in $\LG$ and $\LGgr$. 

The bicategory of $\sotwo$-homotopy fixed points $\kbso$ was defined and endowed with a natural symmetric monoidal structure in \cite{HV}. 
Objects of $\kbso$ are pairs $(A,\sigma_A)$ where $A\in\Bfd$ and~$\sigma_A$ is a trivialisation of the Serre automorphism~$S_A$, i.\,e.\ a 2-isomorphism $S_A \to 1_A$ in~$\B$. 
A 1-morphism $(A,\sigma_A) \to (A', \sigma_{A'})$ in $\kbso$ is an equivalence $F \in \Bfd(A,A')$ such that 
$\lambda_F \circ (\sigma_{A'} \otimes 1_F) \circ S_F = \rho_F \circ (1_F \otimes \sigma_A)$
where~$S_F$ is the 2-isomorphism constructed in the proof of \cite[Prop.\,2.8]{HV}, and 2-morphisms $F\to F'$ in $\kbso$ are 2-isomorphism $F\to F'$ in~$\B$. 
Building on \cite{l0905.0465, spthesis, HSV, HV}, extended oriented TQFTs with values in~$\B$ were classified by fully dualisable objects with trivialisable Serre automorphisms in \cite{Hesse}: 

\begin{theorem}[Cobordism hypothesis for oriented 2-bordisms, {\cite[Cor.\,5.9]{Hesse}}]
\label{thm:CHoriented}
Let~$\B$ be a symmetric monoidal bicategory. 
There is an equivalence 
\begin{align}
\textrm{Fun}_{\textrm{sym}\,\otimes} \big( \!\Bordor, \B \big) & \stackrel{\cong}{\lra} \kbso \, , \nonumber
\\ 
\zz & \lmt \zz(+) \, . 
\end{align}
\end{theorem}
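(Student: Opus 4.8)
The plan is to obtain the oriented statement from the framed cobordism hypothesis (Theorem~\ref{thm:CHframed}) by passing to $\sotwo$-homotopy fixed points on both sides. The guiding idea is that oriented tangential structure is framing data retained only up to the rotation action of the structure group $\sotwo$; thus $\Bordor$ behaves as the homotopy quotient of $\Bordfr$ by this action, and symmetric monoidal 2-functors out of a homotopy quotient are exactly $\sotwo$-homotopy fixed points of the associated functor 2-groupoid. Concretely, I would establish the chain
\be
\textrm{Fun}_{\textrm{sym}\,\otimes}(\Bordor, \B) \;\cong\; \big( \textrm{Fun}_{\textrm{sym}\,\otimes}(\Bordfr, \B) \big)^{\sotwo} \;\cong\; \mathscr{K}(\Bfd)^{\sotwo} = \kbso ,
\ee
where the second equivalence is the framed cobordism hypothesis made $\sotwo$-equivariant and the first realises oriented functors as fixed points.

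First I would pin down the $\sotwo$-action. Rotating the 2-framings defines an action of $\sotwo$ on $\Bordfr$, which transports through Theorem~\ref{thm:CHframed} to the action $\varrho$ on $\mathscr{K}(\Bfd)$ recalled in the text. Because $\sotwo$ is connected with $\pi_1 \cong \Z$ and $\pi_2 = 0$, the 2-groupoid $\Pi_2(\sotwo)$ is the delooping of $\Z$, so the whole action is controlled by the generator $1_*$ of $\pi_1$. The decisive identification --- the technical heart, due to \cite{l0905.0465} and worked out in \cite{HSV, HV} --- is that $1_*$ acts on a fully dualisable object $A$ by the Serre automorphism $S_A$ of~\eqref{eq:SerreAutom}. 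Proving this means tracking the effect of a $2\pi$-rotation of the framing on the evaluation and coevaluation data at the point and recognising the resulting 1-morphism as the composite defining $S_A$.

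Next I would invoke the explicit bicategorical model of $\sotwo$-homotopy fixed points from \cite{HV}: a fixed point is a pair $(A,\sigma_A)$ with $A\in\Bfd$ and $\sigma_A \colon S_A \to 1_A$ trivialising the generator, subject to coherence with the higher cells of $\Pi_2(\sotwo)$, with 1- and 2-morphisms as recalled before the statement, and with the symmetric monoidal structure of $\kbso$ also taken from \cite{HV}. Assembling the two equivalences above and unwinding that the composite sends a TQFT $\zz$ to the object $\zz(+)$ --- equipped with the trivialisation that $\zz$ induces on $S_{\zz(+)}$ via its action on the oriented 2-morphisms --- then gives the asserted equivalence $\zz \mapsto \zz(+)$.

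The hard part will be the first equivalence in the display, identifying oriented symmetric monoidal functors with $\sotwo$-homotopy fixed points. Lurie's original argument is phrased via $(\infty,n)$-categories, whereas here everything must be carried out strictly with bicategories, pseudonatural transformations and modifications. This forces one to produce a tractable fixed-point bicategory together with its monoidal structure, and then to match the generators-and-relations presentation of $\Bordor$ from \cite{spthesis} against the fixed-point data cell by cell: objects against fully dualisable objects carrying a Serre trivialisation, and the oriented 2-morphism generators (saddle, upside-down saddle, cap, cup, and cusp isomorphisms) against the coherence data of a fixed point, all compatibly with the symmetric monoidal structures. It is this coherence bookkeeping, rather than any single conceptual leap, that makes \cite[Cor.\,5.9]{Hesse} substantial.
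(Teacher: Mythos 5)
The paper offers no proof of this statement---it is imported wholesale from \cite[Cor.\,5.9]{Hesse}, and the paragraphs preceding the theorem review exactly the ingredients you assemble: the $\sotwo$-action rotating framings, its transport through the framed cobordism hypothesis (Theorem~\ref{thm:CHframed}), the identification of the generator $1_*$ acting by the Serre automorphism~\eqref{eq:SerreAutom} following \cite{l0905.0465, HSV, HV}, and the explicit fixed-point bicategory $\kbso$ of \cite{HV}. Your outline therefore reproduces the strategy of the cited proof, and you correctly flag that the homotopy-quotient picture of $\Bordor$ is only heuristic in the strict bicategorical setting---the step that actually carries the weight is the one you name last, matching the generators-and-relations presentation of $\Bordor$ from \cite{spthesis} against the fixed-point data, which is precisely where \cite{Hesse} does the work.
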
 

\medskip 

We return to the symmetric monoidal bicategory~$\LG$. 
To determine extended oriented TQFTs with values in $\LG$ we have to compute Serre automorphisms for all objects: 

\begin{lemma}
\label{lem:SWIW} 
Let $W \equiv (\Bbbk[x_1,\dots,x_n], W) \in \LG$. 
Then $S_W \cong I_W[n]$. 
\end{lemma}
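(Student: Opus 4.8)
The plan is to substitute the explicit $\LG$-data of Sections~\ref{subsec:monoidalLG}--\ref{subsec:LGduals} into the defining composite~\eqref{eq:SerreAutom} and to track only the $\Z_2$-grading shifts, since by~\eqref{eq:2shift} the shift functor is $2$-periodic and the answer is determined by a single parity. First I would record that almost every structural $1$-morphism occurring in~\eqref{eq:SerreAutom} has a Koszul unit as its underlying matrix factorisation and hence carries no shift: the unitor components $r_{(W,*)}$ and $r_W^-$ equal $I_W$ by~(M5), the braiding component $b_{(W,W)}$ equals $I_{W+W}$ by~(S1), the right-dual evaluation $\tev_W$ equals $I_W$ by~(D2), and $1_W = I_W$, $1_{W^*} = I_{-W}$ are the evident identities. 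Consequently the only ingredient that can contribute a nontrivial shift is the right adjoint $\tev_W^\dagger$.

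Next I would compute $\tev_W^\dagger$. Viewed as a $1$-morphism $W \sta W^* \to \mathds{I}$, the evaluation $\tev_W = I_W$ has source $W \sta W^* = (\Bbbk[x,x'], W(x) - W(x'))$ with $2n$ polynomial variables, so~\eqref{eq:Xdual} gives $\tev_W^\dagger = \tev_W^\vee[2n] = I_W^\vee[2n]$. Since $[2n]=[0]$ by~\eqref{eq:2shift}, this equals $I_W^\vee$. The self-duality $I_W^\vee \cong I_W[n]$ then finishes the calculation; it holds because the identity $1$-morphism $I_W \in \LG(W,W)$ is its own right adjoint, so that $I_W \cong I_W^\dagger = I_W^\vee[n]$ by~\eqref{eq:Xdual}. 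Hence $\tev_W^\dagger \cong I_W[n]$, which is the single shift we are after.

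Finally I would assemble the composite. The shift functor~$[1]$ is compatible with both horizontal composition~$\otimes$ and the monoidal product~$\sta$, so shifts add and the total shift of~\eqref{eq:SerreAutom} is $[n]$, contributed entirely by $\tev_W^\dagger$. It then remains to identify the underlying unshifted composite with $I_W$: horizontal composition with a Koszul unit is the identity up to the explicit unitors $\lambda,\rho$ of~\eqref{eq:lambdarho}, while the braiding $b_{(W,W)}$ and the suppressed associators only reorder and relabel variables. Collapsing the unit factors leaves the identity $1$-morphism $I_W \in \LG(W,W)$, whence $S_W \cong I_W[n]$.

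The main obstacle is the honest bookkeeping hidden in this last step: one must insert the structural isomorphisms $\sta_{(\cdots)}$, the rebracketing~$\mathcal A$ and the unitors $\lambda,\rho$ needed to make~\eqref{eq:SerreAutom} type-check, and then verify that all unit factors genuinely cancel to $I_W$ and that the reorderings introduce no extra \emph{odd} shift. Because~\eqref{eq:2shift} makes the shift meaningful only modulo~$2$, it suffices to control the total parity, namely $n \bmod 2$, which renders the chase robust; carrying it out explicitly with the conventions of Remark~\ref{rem:extracare} is the tedious part.
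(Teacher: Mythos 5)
Your proposal is correct and takes essentially the same route as the paper's proof: every factor in~\eqref{eq:SerreAutom} except $\tev_W^\dagger$ has the unit $I_W$ as its underlying matrix factorisation, while $\tev_W^\dagger = \tev_W^\vee[2n] = \tev_W^\vee \cong I_W^\dagger[n] \cong I_W[n]$ by~\eqref{eq:Xdual} and the 2-periodicity~\eqref{eq:2shift}. The paper likewise relegates the explicit bookkeeping with the unitors $\lambda,\rho$ and swapping isomorphisms to \cite[Lem.\,5.2.3]{FlavioThesis}, exactly the step you single out as the tedious part.
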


\begin{proof}
According to Sections~\ref{subsec:symmetricLG}--\ref{subsec:LGduals}, the factors $r_{(W,*)}$, $1_W$, $\tev_W$, $b_{(W,W)}$, $1_{W^*}$ and $r_W^-$ in the defining expression~\eqref{eq:SerreAutom} are all given by the matrix factorisation underlying the unit $I_W \in \LG(W,W)$, while the matrix factorisation underlying $\tev_W^\dagger = \tev_W^\vee [2n] = \tev_W^\vee$ is $I_W^\vee \cong I_W^\dagger[n] \cong I_W[n]$. 
This leads to $S_W \cong I_W[n]$. 
(A straighforward computation, taking into account subtleties of the kind mentioned in Remark~\ref{rem:extracare}, is carried out in the proof of \cite[Lem.\,5.2.3]{FlavioThesis} to construct an explicit isomorphism $S_W \to I_W$ only in terms of~$\lambda$, $\rho$ and standard swapping isomorphisms.)
\end{proof}

The general fact $I_W \ncong I_W[1]$ (even $\Hom_{\LG(W,W)}(I_W,I_W[1]) = 0$ is true) together with Theorem~\ref{thm:CHoriented} thus imply: 

\begin{proposition}
\label{prop:LGexori} 
An object $W \equiv (\Bbbk[x_1,\dots,x_n], W) \in \LG$ determines an extended oriented TQFT 
\be 
\zzwor \colon \Bordor \lra \LG 
\quad \text{ with } \quad \zzwor(+) = W 
\ee 
if and only if~$n$ is even. 
\end{proposition}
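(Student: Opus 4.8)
The plan is to apply the cobordism hypothesis for oriented 2-bordisms (Theorem~\ref{thm:CHoriented}), which identifies extended oriented TQFTs with values in $\LG$ with objects of the bicategory $\kbso$ of $\sotwo$-homotopy fixed points via $\zz \mapsto \zz(+)$. An object of $\kbso$ lying over~$W$ is a pair $(W,\sigma_W)$, where~$W$ is fully dualisable (automatic by Corollary~\ref{cor:LGfullydualisable}) and~$\sigma_W$ is a trivialisation of the Serre automorphism, i.e.\ a 2-isomorphism $S_W \to 1_W = I_W$ in~$\LG$. Since full dualisability holds for every object of~$\LG$, I would reduce the claim to the statement that $W$ determines an extended oriented TQFT $\zzwor$ with $\zzwor(+) = W$ if and only if $S_W \cong I_W$ in the Hom category $\LG(W,W)$.

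Next I would substitute the Serre automorphism computed in Lemma~\ref{lem:SWIW}, namely $S_W \cong I_W[n]$, and exploit the relation $[2] = \textrm{Id}_{\LG(W,W)}$ from~\eqref{eq:2shift}. This makes the $n$-fold shift depend only on the parity of~$n$: one gets $I_W[n] \cong I_W$ when $n$ is even and $I_W[n] \cong I_W[1]$ when $n$ is odd, so the whole question collapses to comparing $I_W$ with its odd shift~$I_W[1]$.

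To finish, I would treat the two directions separately. If $n$ is even, then $S_W \cong I_W[n] \cong I_W$, so a trivialisation~$\sigma_W$ exists and $(W,\sigma_W)$ is an object of $\kbso$; Theorem~\ref{thm:CHoriented} then produces the desired TQFT $\zzwor$. If $n$ is odd, then $S_W \cong I_W[1]$, and the vanishing $\Hom_{\LG(W,W)}(I_W, I_W[1]) = 0$ stated just before the proposition rules out any nonzero 2-morphism $S_W \to I_W$, hence any 2-isomorphism; so no lift to $\kbso$ exists and~$W$ determines no extended oriented TQFT with value~$W$ at the point.

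The main obstacle is not computational---once Lemma~\ref{lem:SWIW} and the vanishing of $\Hom_{\LG(W,W)}(I_W, I_W[1])$ are granted, the argument is formal. The one point requiring care is the correct reading of Theorem~\ref{thm:CHoriented}: I must ensure that mere \emph{existence} of the oriented TQFT corresponds to the existence of \emph{some} trivialisation of $S_W$ (the finer classification up to equivalence would additionally remember the choice of~$\sigma_W$), and that full dualisability imposes no extra constraint. Both facts are furnished by Corollary~\ref{cor:LGfullydualisable} and the description of $\kbso$ preceding the theorem, so no further work is required.
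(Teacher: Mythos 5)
Your proposal is correct and follows essentially the same route as the paper: the paper likewise combines Lemma~\ref{lem:SWIW} ($S_W \cong I_W[n]$) with the periodicity $[2] = \textrm{Id}$, the vanishing $\Hom_{\LG(W,W)}(I_W, I_W[1]) = 0$ for the odd case, and Theorem~\ref{thm:CHoriented} to conclude. You merely spell out the reduction to trivialisability of the Serre automorphism in more detail than the paper's one-line deduction, which is fine.
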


\begin{remark}
\label{rem:dCY}
Let $d\in \Z$. 
Following \cite{KellerCalabiYau}, we say that a $\Bbbk$-linear, Hom-finite triangulated category~$\mathcal T$ with shift functor~$\Sigma$ is \textsl{weakly $d$-Calabi-Yau} if~$\mathcal T$ admits a Serre functor%
\footnote{A \textsl{Serre functor} of~$\mathcal{T}$ is an additive equivalence $\mathcal S_{\mathcal{T}} \colon \mathcal{T} \to \mathcal{T}$ together with isomorphisms $\Hom_{\mathcal T}(A,B) \cong \Hom_{\mathcal T}(B,\mathcal{S}_{\mathcal T}(A) )^*$ that are natural in $A,B \in \mathcal T$.} 
$\mathcal{S}_{\mathcal{T}}$ such that $\Sigma^d \cong \mathcal{S}_{\mathcal{T}}$. 
The triangulated category $\LG(0,W)$ is known to admit a Serre functor $\mathcal{S}_{\LG(0,W)} \cong [n] = [n-2]$. 
Hence $\LG(0,W)$ is weakly $(n-2)$-Calabi-Yau, the Serre automorphism and Serre functor coincide in the sense that $S_W \otimes (-) \cong \mathcal{S}_{\LG(0,W)}$, and the condition that~$S_W$ is trivialisable is equivalent to the condition that the Serre functor is isomorphic to the identity. 
\end{remark}

\begin{remark}
\label{rem:degreebusiness}
\begin{enumerate}
\item 
Proposition~\ref{prop:LGexori} can be interpreted as ``every Landau-Ginzburg model with an even number of variables can be extended to the point as an oriented TQFT''. 
However, since for odd (and even)~$n$ there is an isomorphism of Frobenius algebras 
\begin{align}
\Jac_W & = \Bbbk[x_1,\dots,x_n]/(\partial_{x_1} W, \dots, \partial_{x_n} W) \nonumber 
\\ 
& \cong \Bbbk[x_1,\dots,x_n,y]/(\partial_{x_1} (W+y^2), \dots, \partial_{x_n} (W+y^2), \partial_y (W+y^2)) \nonumber 
\\ 
& = \Jac_{W+y^2} \, , 
\end{align} 
\textsl{every} non-extended oriented Landau-Ginzburg model appears as part of an extended oriented TQFT~$\zzwor$ or $\zz_{W+y^2}^{\textrm{or}}$ (depending on whether~$n$ is even or odd, respectively), namely as the commutative Frobenius algebra with underlying vector space $\zzwor(S^1)$ or $\zz_{W+y^2}^{\textrm{or}}(S^1)$. 
Note that for this argument to work we need to ensure that this Frobenius algebra is really isomorphic to the associated Jacobi algebra, as we do with Theorems~\ref{thm:JacW1} and~\ref{thm:JacW2}. 
\item 
Instead of $\LG$ one can also consider the symmetric monoidal bicategory $\LG^{\bullet/2}$ which is equal to $\LG$ except that the vector space of 2-morphisms $(X,d_X) \to (X,d_{X'})$ is defined to be $H^\bullet_{\delta_{X,X'}}(\Hom_{\Bbbk[x,z]}(X,X'))/\Z_2$, i.\,e.\ both even and odd cohomology of the differential $\delta_{X,X'}$ in \eqref{eq:deltadifferential} are included while $\zeta \in \Hom_{\Bbbk[x,z]}(X,X')$ and~$-\zeta$ are identified after taking cohomology. 
Dividing out this $\Z_2$-action circumvents the issue that without it the interchange law would only hold up to a sign, as we have 
$
(\zeta_1 \otimes \zeta_2) \circ (\xi_1 \otimes \xi_2) 
	= (-1)^{|\zeta_2| \cdot |\xi_1|}
(\zeta_1 \circ \xi_1) \otimes (\zeta_2 \circ \xi_2)
$ 
for appropriately composable homogeneous 2-morphisms. 
Such $\Z_2$-quotients also appear in \cite{kr0401268}; the bicategory $\LG^{\bullet/2}$ is described in more detail in \cite[Sect.\,5.3.1]{FlavioThesis} (where it is denoted LG). 

In particular, for every $(\Bbbk[x_1,\dots,x_n], W) \in \LG^{\bullet/2}$ there is an even/odd isomorphism $I_W \cong I_W[n]$ for~$n$ even/odd. 
Hence by Lemma~\ref{lem:SWIW} every object of $\LG^{\bullet/2}$ determines an extended oriented TQFT with values in $\LG^{\bullet/2}$. 
\item 
A better way to deal with the signs in the interchange law mentioned in part~(ii) above is to incorporate them into a richer conceptual structure. 
Part of this involves the natural differential $\Z_2$-graded categories (with differential~$\delta_{X,X'}$ as above) studied in \cite{d0904.4713}, whose even cohomologies are the matrix factorisation categories of Section~\ref{subsec:LGdef}. 
Such bicategories of differential graded matrix factorisation categories are studied in \cite{bfk1105.3177v3}, and demanding their monoidal product to be made up of differential graded functors produces Koszul signs in the interchange law. 

A wider perspective on Koszul signs and parity issues in Landau-Ginzburg models as discussed here is that they are thought to be the topological twists of supersymmetric quantum field theories, see e.\,g.\ \cite{mirrorbook, ll9112051, hl0404}. 
Formalising this construction in a functorial field theory setting would involve symmetric monoidal super 2-functors on super bicategories of super bordisms, which is a theory whose details to our knowledge have not been worked out. 
Relatedly, we expect the graded pivotal bicategory $\LG$ of \cite{cm1208.1481} to arise as the bicategory associated to a non-extended oriented defect TQFT on super bordisms (which again has not been defined in detail as far as we know), paralleling the non-super construction of \cite{dkr1107.0495} reviewed in \cite{TQFTlecturenotes}. 
\end{enumerate}
\end{remark}

\begin{theorem}
\label{thm:JacW2} 
%arxiv_v2: 
	%For every $(\Bbbk[x_1,\dots,x_n], W) \in \LG$, 
	 For every $(\Bbbk[x_1,\dots,x_n], W) \in \LG$ with~$n$ even, 
we have that 
\be 
\Big(\zzwfr(S^1_0), \, \zzwfr(\textrm{pair-of-pants}), \, \zzwfr(\textrm{cup})(1), \, \zzwfr(\textrm{cap}) \Big)  
\ee 
is isomorphic to (recall~\eqref{eq:residuemap} for the residue trace $\langle-\rangle_W$)
\be 
%arxiv_v2: 
	%\Big( \Jac_W, \, \mu_{\Jac_W}, \, 1, \, \langle - \rangle_W \Big)
	 \Big( \Jac_W, \, \mu_{\Jac_W}, \, 1, \, \langle \zeta(-) \rangle_W \Big)
\ee 
%arxiv_v2: 
	%as a commutative Frobenius $\Bbbk$-algebra (where the traces $\zzwfr(\textrm{cap})$ and $\langle - \rangle_W$ induce the Frobenius pairings on $\zzwfr(S^1_0)$ and $\Jac_W$, respectively). 
	 as a commutative Frobenius $\Bbbk$-algebra, where the traces $\zzwfr(\textrm{cap})$ and $\langle - \rangle_W$ induce the Frobenius pairings on $\zzwfr(S^1_0)$ and $\Jac_W$, respectively, and $\zeta \in \Jac_W$ is a uniquely determined invertible element. 
\end{theorem}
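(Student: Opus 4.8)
The plan is to build on Theorem~\ref{thm:JacW1}, which already identifies the first two components: $\zzwfr(S^1_0) \cong \Jac_W$ via the explicit isomorphism $\kappa \colon \ev_W \otimes\,{}^\dagger\!\ev_W \cong \Jac_W$, and $\zzwfr(\text{pair-of-pants}) = \mu_{\Jac_W}$ (in particular the multiplication is commutative and associative). It therefore remains to identify the cup with the unit and the cap with the residue trace $\langle-\rangle_W$ of~\eqref{eq:residuemap}, and then to recognise that the resulting data assemble into a commutative Frobenius algebra. Throughout I would work inside $\End_\LG(\mathds{I}) \cong \Vectk^{\Z_2}$, so that both the cup and the cap become ordinary $\Bbbk$-linear maps $\Bbbk \to \Jac_W$ and $\Jac_W \to \Bbbk$ after transport along~$\kappa$.

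First I would treat the cup $\emptyset \to S^1_0$. Exactly as the pair-of-pants was decomposed in the proof of Theorem~\ref{thm:JacW1}, I would write the cup as a whiskering of cylinders over the elbows with a single coevaluation-type 2-morphism (the saddle $\coev_{\ev_+}$, adjoint to the upside-down saddle $\ev_{\ev_+}$ of that proof). Applying $\zzwfr$ and post-composing with~$\kappa$ produces a map $\Bbbk \to \Jac_W$; using the explicit $\coev_W$ and $\tcoev_W$ of~\eqref{eq:evalcoevX}, which on the trivial-target unit~$I_W$ send the generator to the diagonal element $\sum_i e_i \otimes e_i^*$ up to signs, one finds that~$\kappa$ carries this to the constant polynomial~$1$. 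Hence $\zzwfr(\text{cup})(1) = 1 \in \Jac_W$.

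Next I would treat the cap $S^1_0 \to \emptyset$, which is the step carrying all the content. I would decompose it dually, as an evaluation-type 2-morphism whiskered with cylinders, and pre-compose the resulting 2-morphism with~$\kappa^{-1}$ to obtain a linear map $\Jac_W \to \Bbbk$. The crucial observation is that this map is assembled from the explicit evaluation 2-morphism $\ev_W$ (equivalently $\tev_W$) of~\eqref{eq:evalcoevX}, whose $l = 0$ summand is precisely $\phi \mapsto \Res\!\left[\frac{\phi\,\operatorname{d}\!x}{\partial_{x_1}W, \ldots, \partial_{x_n}W}\right]$; because the target object is~$\mathds{I}$ and the cap projects onto $\theta$-degree zero, all higher-$\theta$-degree summands drop out. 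Thus $\zzwfr(\text{cap})$ is identified with the residue trace $\langle-\rangle_W$.

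Finally, with the unit, multiplication and trace in hand, the pairing induced by the cap is $\langle\phi,\psi\rangle = \zzwfr(\text{cap})\big(\mu_{\Jac_W}(\phi\otimes\psi)\big) = \langle\phi\psi\rangle_W$, i.e.\ precisely the residue pairing~\eqref{eq:residuepairing}; this is non-degenerate for an isolated singularity, so together with the commutativity and associativity inherited from Theorem~\ref{thm:JacW1} it exhibits $(\Jac_W, \mu_{\Jac_W}, 1, \langle-\rangle_W)$ as a commutative Frobenius $\Bbbk$-algebra, and~$\kappa$ as an isomorphism of such. The main obstacle I anticipate is the cap computation: matching the composite of explicit adjunction maps -- with their Koszul signs, the extension/restriction-of-scalars bookkeeping flagged in Remark~\ref{rem:extracare}, and the $\theta$-degree projections -- to the single residue summand, and checking that the cusp and zigzag relations of $\Bordfr$ are respected, so that the answer is independent of the chosen decomposition.
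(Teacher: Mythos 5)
Your proposal follows essentially the same route as the paper's proof: take the algebra structure from Theorem~\ref{thm:JacW1}, identify the cup and cap with the adjunction 2-morphisms $\coev_{\ev_+}$ and $\tev_{\ev_+}$, and evaluate the explicit formulas~\eqref{eq:evalcoevX} --- with only the $l=0$ summand surviving because $\ev_W$ has trivial target $\mathds{I}$ --- to obtain the unit and the residue trace after transport along $\kappa$. Two cosmetic slips worth fixing: the cup is $\coev_{\ev_+}$ on the nose (no whiskering with cylinders is needed, and it is not a saddle; the saddle is $\tcoev_{\ev_+}$), and the 2-morphism computing the cap is $\tev_{\ev_W}$, the adjunction 2-morphism associated to the \emph{1-morphism} $\ev_W$, not the object-level duality data $\ev_W$/$\tev_W$ that your parenthetical suggests.
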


\begin{proof}
The isomorphism on the level of $\Bbbk$-algebras was already established in Theorem~\ref{thm:JacW1}, it remains to compute the action of~$\zzwfr$ on the cap and cup 2-morphisms. 

The cap is the bordism $\tev_{\ev_+}$ from the 2-framed circle $\ev_+ \# \ev_+^\dagger$
%arxiv_v2: 
	%to~$1_\emptyset$, so~$\zzwfr$ sends it 
	 to~$1_\emptyset$. We first assume that~$\zzwfr$ sends it
to the 2-morphism $\tev_{\ev_W}$ from $\ev_W \otimes \ev_W^\dagger = \ev_W \otimes \,{}^\dagger\!\ev_W$ to~$I_0$. 
Since $\ev_W \colon (\Bbbk[x],-W) \sta (\Bbbk[x],W) \to (\Bbbk,0)$ has trivial target, only the summand $l=0$ contributes to the expression for $\tev_{\ev_W}$ in~\eqref{eq:evalcoevX}, and pre-composing with the isomorphism $\Jac_W \cong \ev_W \otimes\, {}^\dagger\!\ev_W$ from the proof of Theorem~\ref{thm:JacW1} produces the residue trace $\langle-\rangle_W$. 

Similarly, the $\text{cup} \colon \emptyset \to S^1_0 = \ev_+ \# \,{}^\dagger\!\ev_+$ is equal to 
$
\coev_{\ev_{+}}$. 
Using the explicit expression for $\coev_{\ev_W}$ in~\eqref{eq:evalcoevX} we see that post-composing $\zzwfr(\text{cup})(1)$ with the isomorphism $\ev_W \otimes\,{}^\dagger\!\ev_W \cong \Jac_W$ is indeed the unit $1\in\Jac_W$. 

%arxiv_v2: 
	To complete the proof we must investigate to what extent our choice of adjunction data in $\LG$ gives rise to a ``coherent fully dual pair'' (where again we rely on the result of~\cite{Pstragowski} that extended framed TQFTs are equivalent to coherent fully dual pairs): if the coherent dual pair $(W,-W,\ev_W,\coev_W, c_{\textrm{r}}, c_{\textrm{l}})$ can be lifted to a coherent fully dual pair then $\zzwfr$ can be chosen such that $\zzwfr(\tev_{\ev_+})=\tev_{\ev_W}$. 
	First we observe that by Lemma~\ref{lem:SWIW} there is a ``fully dual pair'' 
	\be
	\big( W, -W, \ev_W, \coev_W, I_W, I_W, c_{\textrm{r}}, c_{\textrm{l}}, \mu_e, \epsilon_ e, \mu_c, \epsilon_c, \psi, \phi \big) 
	\ee
	in the sense of \cite[Def.\,3.10]{Pstragowski}, where $\phi := \lambda_{I_W} =: \psi$ and $\mu_e, \epsilon_ e, \mu_c, \epsilon_c$ are equal to $\coev_{\ev_W}, \ev_{\ev_W}, \coev_{\coev_W}, \ev_{\coev_W}$ up to appropriate composition with the isomorphisms $\lambda^{\pm 1}, \rho^{\pm 1}$. 
	As explained in the proof of \cite[Thm.\,3.16]{Pstragowski}, every fully dual pair can be made coherent by changing only the counit 2-morphisms by composition with an automorphism~$\zeta$ of~$I_W$ (and possibly the cusp isomorphism~$c_{\textrm{l}}$ which however in our case is not necessary as observed in the proof of Theorem~\ref{thm:JacW1}). 
	Given a fully dual pair, the map~$\zeta$ is uniquely determined by the cusp-counit equation of \cite[Def.\,3.12]{Pstragowski}, which involves two adjunction maps on one side and none on the other. 
	In our case one finds that the constraint reduces to the equality of two linear maps $\Jac_W \otimes_\Bbbk \Jac_W \to \Bbbk$, one of which involves the residue trace $\langle - \rangle_W$ pre-composed with $\zeta \in \textrm{Aut}(I_W) \subset \End(I_W) \cong \Jac_W$, while the other is a composite of structure maps of the symmetric monoidal bicategory $\LG$ (without any adjunction maps). 
\end{proof}

Paralleling the above proof we see that for even~$n$, the extended oriented TQFT $\zzwor$ also assigns the Frobenius algebra $\Jac_W$ to the oriented circle, pair-of-pants, cup and 
%arxiv_v2: 
	%cap. 
	 cap (up to an invertible element $\zeta \in \Jac_W$). 

\begin{example}
\label{ex:KRTQFT}
For every $N\in \Z_{\geqslant 2}$, the potential $x^{N+1}$ determines an extended oriented TQFT with values in the symmetric monoidal bicategory $\LG^{\bullet/2}$ introduced in Remark~\ref{rem:degreebusiness}(ii). 
We denote this TQFT by $\zz_{\textrm{KR}}$ as it recovers -- directly from the cobordism hypothesis -- the explicit construction that Khovanov and Rozansky gave in \cite[Sect.\,9]{kr0401268}. 
In loc.\ cit.\ the authors determine their TQFT by describing what it assigns to the point~$+$, the circle, the cap, the cup and the saddle bordisms in $\Bordor$.
Except for the saddle we have already computed all these assignments of $\zz_{\textrm{KR}}$ for any potential~$W$ in Theorems~\ref{thm:JacW1} and~\ref{thm:JacW2}, and for $W=x^{N+1}$ they match the prescriptions of \cite{kr0401268} (except for non-essential prefactors for the cap and cup morphisms). 

To establish that the TQFT $\zz_{\textrm{KR}}$ indeed matches that of \cite[Sect.\,9]{kr0401268} it remains to compute $\zz_{\textrm{KR}}(\text{saddle}) = \zz_{\textrm{KR}}(\tcoev_{\ev_{+}})$ and compare it to the explicit matrix expressions in \cite[Page\,81]{kr0401268} (or Page~95 of \href{https://arxiv.org/abs/math/0401268v2}{arXiv:math/0401268v2 [math.QA]}). 
Since $\zz_{\textrm{KR}}(\tcoev_{\ev_{+}}) = \tcoev_{\ev_{x^{N+1}}}$ this is another exercise in using the formulas~\eqref{eq:evalcoevX} for adjunction 2-morphisms. 
This is carried out in \cite[Sect.\,5.3.2]{FlavioThesis}, finding 
\be
\label{eq:KRsaddle}
\zz_{\textrm{KR}}(\text{saddle}) 
= 
\begin{pmatrix}
e_{124} & 1 & 0 & 0 \\ 
-e_{234} & 1 & 0 & 0 \\
0 & 0 & -1 & 1 \\ 
 0 & 0 & - e_{234} & - e_{124}
\end{pmatrix}
\ee 
where the entries $e_{ijk} := \sum_{a+b+c=N-1} x_i^a x_j^b x_k^c \in \Bbbk[x_1,x_2,x_3,x_4]$ depend on four variables as the source and target of $\tcoev_{\ev_{x^{N+1}}}$ involve four copies of $x^{N+1} \in \LG^{\bullet/2}$. 
Up to a minor normalisation issue%
\footnote{More precisely, \eqref{eq:KRsaddle} agrees with the saddle morphism of \cite{kr0401268} if the arbitrary polynomial~$r$ of degree $N-2$ in loc.\ cit.\ is set to $\sum_{a+b+c+d=N-2} x_1^a x_2^b x_3^c x_4^d$, and if non-scalar entries of the matrix are multiplied by~$\tfrac{1}{2}$. The latter seems to be a typo in \cite{kr0401268} as without these factors the expression would not be closed with respect to the differential $\delta_{I_{W} \sta I_{-W}, \ev_{x^{N+1}}^\dagger \otimes \ev_{x^{N+1}}}$.} 
the expression~\eqref{eq:KRsaddle} agrees with that of \cite{kr0401268}. 

In summary, we verified that the construction of \cite[Sect.\,9]{kr0401268} can be understood as an application of the cobordism hypothesis to the potential $W=x^{N+1}$. 
\end{example}

\bigskip 

We return to the bicategory $\LGgr$ of Section~\ref{subsec:gradedLG}. 
All the above results in the present section have analogues or refinements in $\LGgr$. 
In particular:  

\begin{proposition}
\label{prop:LGgrexori} 
An object $W \equiv (\Bbbk[x_1,\dots,x_n], W) \in \LGgr$ determines an extended oriented TQFT 
\be 
\label{eq:zzWgror}
\zz_{W,\textrm{gr}}^{\textrm{or}} \colon \Bordor \lra \LGgr 
\quad \text{ with } \quad 
\zz_{W,\textrm{gr}}^{\textrm{or}}(+) = W 
\ee 
if and only if $[n-2]\{ \tfrac{1}{3} c(W) \} \cong \text{Id}_{\LGgr(0,W)}$. 
\end{proposition}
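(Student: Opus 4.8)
The plan is to combine the cobordism hypothesis for oriented bordisms (Theorem~\ref{thm:CHoriented}) with an explicit computation of the Serre automorphism~$S_W$ in $\LGgr$, in direct analogy with the ungraded Lemma~\ref{lem:SWIW}. Since every object of $\LGgr$ is fully dualisable by Corollary~\ref{cor:LGgrfullydual}, Theorem~\ref{thm:CHoriented} tells us that~$W$ underlies an extended oriented TQFT~$\zz_{W,\textrm{gr}}^{\textrm{or}}$ precisely when the Serre automorphism~$S_W$ of~\eqref{eq:SerreAutom} is trivialisable, i.\,e.\ when $S_W \cong I_W$ in $\LGgr(W,W)$. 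Thus the whole content of the proposition is the computation of~$S_W$ together with a reformulation of the condition $S_W \cong I_W$ as a statement about the shift functor on $\LGgr(0,W)$.

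First I would compute~$S_W$. As in the proof of Lemma~\ref{lem:SWIW}, the factors $r_{(W,*)}$, $1_W$, $\tev_W$, $b_{(W,W)}$, $1_{W^*}$ and $r_W^-$ appearing in~\eqref{eq:SerreAutom} are all built from units together with structure maps of $\Q$-degree~$0$, so they contribute no net grading shift; the only factor carrying a shift is the right adjoint $\tev_W^\dagger$. Here one must feed the correct data into the adjoint formula~\eqref{eq:Xgradedadjoints}: the source of $\tev_W \colon W \sta W^* \to \mathds{I}$ is the object $W \sta W^*$ in $2n$ variables, and since the dual $W^* = -W$ carries the same variable degrees its central charge is $2c(W)$, so $\tev_W^\dagger = \tev_W^\vee[2n]\{\tfrac{2}{3}c(W)\} = I_W^\vee\{\tfrac{2}{3}c(W)\}$ using $[2n] = [0]$ and $\tev_W = I_W$. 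Applying the graded self-duality of the unit $I_W^\vee \cong I_W[n]\{-\tfrac{1}{3}c(W)\}$ (the graded refinement of $I_W^\vee \cong I_W[n]$, which follows from $I_W^\dagger \cong I_W$ together with $I_W^\dagger = I_W^\vee[n]\{\tfrac{1}{3}c(W)\}$) then yields $\tev_W^\dagger \cong I_W[n]\{\tfrac{1}{3}c(W)\} = I_W[n-2]\{\tfrac{1}{3}c(W)\}$, and hence $S_W \cong I_W[n-2]\{\tfrac{1}{3}c(W)\}$. This reduces to Lemma~\ref{lem:SWIW} upon forgetting the $\Q$-grading, a useful consistency check.

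It then remains to translate the trivialisability condition $S_W \cong I_W$ into the stated functorial condition. Since $S_W \cong I_W[n-2]\{\tfrac{1}{3}c(W)\}$ and the shift functors commute with horizontal composition, for every $X \in \LGgr(0,W)$ we obtain $S_W \otimes X \cong (I_W \otimes X)[n-2]\{\tfrac{1}{3}c(W)\} \cong X[n-2]\{\tfrac{1}{3}c(W)\}$ naturally in~$X$; that is, acting with~$S_W$ on $\LGgr(0,W)$ realises the endofunctor $[n-2]\{\tfrac{1}{3}c(W)\}$, which plays the role of the Serre functor $\mathcal{S}_{\LGgr(0,W)}$ exactly as in Remark~\ref{rem:dCY}. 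In one direction $S_W \cong I_W$ immediately forces $[n-2]\{\tfrac{1}{3}c(W)\} \cong \text{Id}_{\LGgr(0,W)}$. For the converse I would use that the module action $F \mapsto F \otimes (-)$ of $\LGgr(W,W)$ on $\LGgr(0,W)$ is faithful (equivalently, that the Serre functor determines the Serre automorphism up to isomorphism), so that $S_W \otimes (-) \cong \text{Id} \cong I_W \otimes (-)$ gives back $S_W \cong I_W$.

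I expect the main obstacle to be this last reformulation rather than the computation of~$S_W$. The grading bookkeeping in~$S_W$, while delicate (particularly the source central charge $2c(W)$ of $\tev_W$ and the graded self-duality of the unit), is essentially mechanical once~\eqref{eq:Xgradedadjoints} is in hand. The genuinely non-formal point is the backward implication: passing from triviality of the shift functor on the single Hom category $\LGgr(0,W)$ back to the isomorphism $I_W[n-2]\{\tfrac{1}{3}c(W)\} \cong I_W$, which lives in the different category $\LGgr(W,W)$. This requires the action of $\LGgr(W,W)$ on $\LGgr(0,W)$ to be faithful enough to detect the isomorphism class of~$S_W$, which is precisely the graded analogue of the Serre-automorphism/Serre-functor correspondence already invoked for $\LG$ in Remark~\ref{rem:dCY}.
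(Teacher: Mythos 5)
Your reduction via Theorem~\ref{thm:CHoriented} and Corollary~\ref{cor:LGgrfullydual}, and your computation of the Serre automorphism, coincide with the paper's proof: there too the only factor of~\eqref{eq:SerreAutom} carrying a nontrivial shift is $\tev_W^\dagger$, computed via~\eqref{eq:Xgradedadjoints} with source $W \sta W^*$ in $2n$ variables of central charge $2c(W)$, giving $\tev_W^\dagger = \tev_W^\vee[2n]\{\tfrac{2}{3}c(W)\}$ and hence $S_W \cong I_W^\vee[2n]\{\tfrac{2}{3}c(W)\} = I_W^\dagger[n]\{\tfrac{1}{3}c(W)\} \cong I_W[n-2]\{\tfrac{1}{3}c(W)\}$ -- exactly your chain of isomorphisms, including the use of the graded self-duality $I_W^\dagger \cong I_W$. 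The paper's proof stops at this point: it does not spell out the passage between $S_W \cong I_W$ in $\LGgr(W,W)$ and the displayed condition on the endofunctor of $\LGgr(0,W)$, which you correctly single out as the only non-mechanical step. Your forward direction (tensoring the trivialisation with $X \in \LGgr(0,W)$, using that the shifts commute with horizontal composition) is fine.

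However, your proposed repair of the backward direction does not work as stated, and this is a genuine gap. Faithfulness of the action $F \mapsto F \otimes (-)$, i.\,e.\ injectivity on 2-morphisms, is not the property you need -- you need the action to \emph{reflect isomorphism classes} of 1-morphisms -- and that property fails for plain (even triangulated) natural isomorphisms. Concretely, take $W = x^2$ with $|x|=1$, so $n=1$ and $c(W)=0$, and your condition reads $[1] \cong \textrm{Id}_{\LGgr(0,x^2)}$. The generator $X = \big(\Bbbk[x]\oplus\Bbbk[x], \big(\begin{smallmatrix} 0 & x \\ x & 0 \end{smallmatrix}\big)\big)$ of $\LGgr(0,x^2)$ admits the odd, $\Q$-degree-0, invertible endomorphism $\big(\begin{smallmatrix} 0 & -1 \\ 1 & 0 \end{smallmatrix}\big)$ (an odd unit of the Clifford algebra $\End^\bullet(X)$), and since the Hom spaces between the shifted copies $X\{q\}$ are spanned by identities, these maps assemble into a natural isomorphism $[1] \cong \textrm{Id}_{\LGgr(0,x^2)}$. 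Yet $\Hom_{\LGgr(W,W)}(I_W, I_W[1]) = 0$ (the odd Hochschild cohomology vanishes, as the paper notes before Proposition~\ref{prop:LGexori}), so $S_W \cong I_W[1] \ncong I_W$ and $x^2$ does \emph{not} determine an oriented theory. Thus $I_W$ and $I_W[1]$ can act by isomorphic functors without being isomorphic kernels, and no faithfulness-type argument can recover $S_W \cong I_W$ from the naive functor-isomorphism condition. The criterion has to be read, as the paper's proof implicitly does (cf.\ the Serre functor discussion in Remark~\ref{rem:dCY}), with $[n-2]\{\tfrac{1}{3}c(W)\} \cong \textrm{Id}$ interpreted at the level of kernels, i.\,e.\ as $I_W[n-2]\{\tfrac{1}{3}c(W)\} \cong I_W$ in $\LGgr(W,W)$ -- which is precisely what the computation of $S_W$ already delivers, with no translation step remaining.
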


\begin{proof}
By Theorem~\ref{thm:CHoriented}, $W$ determines a TQFT as stated if and only if its Serre automorphism~$S_W$ is trivialisable. 
Paralleling the proof of Lemma~\ref{lem:SWIW} we see that, using~\eqref{eq:Xgradedadjoints}, the matrix factorisation underlying~$S_W$ is 
$\tev_W^\dagger = \tev_W^\vee[2n]\{ \tfrac{2}{3} c(W)\}$.  
Hence~$S_W$ is isomorphic to 
$I_W^\vee[2n]\{ \tfrac{2}{3} c(W)\} = I_W^\dagger[n]\{ \tfrac{1}{3} c(W)\} \cong I_W[n-2]\{ \tfrac{1}{3} c(W)\}$. 
\end{proof}

%arxiv_v3: 
	Let ${\LGgr}^{/\Z}$ be the symmetric monoidal 2-category obtained from $\LGgr$ by replacing the hom categories $\LGgr(W,V)$ with the orbit categories $\LGgr(W,V)/\Z$ obtained by dividing out the action of the shift functor $\Sigma=[1]\{1\}$, i.\,e.\ 
	\be 
	\Hom_{{\LGgr}^{/\Z}}(X,Y) = \bigoplus_{k\in\Z}\Hom_{\LGgr}(X,\Sigma^k(Y))
	\ee 
	for 1-morphism $X,Y \in \textrm{Ob}({\LGgr}^{/\Z}(W,V)) = \textrm{Ob}({\LGgr}(W,V))$. 
	It follows that in ${\LGgr}^{/\Z}$, we have $X\cong \Sigma^k(X)$ for all 1-morphisms~$X$ and $k\in\Z$ (with~$1_X$ viewed as a 2-isomorphism of degree~$k$). 
	
	In the setting of orbit categories, Calabi-Yau varieties give rise to oriented extended TQFTs: 

\begin{corollary}
\label{cor:WCY}
If for $(\Bbbk[x_1,\dots,x_n], W) \in \LGgr$ the hypersurface $\{ W = 0 \}$ in weighted projective space is a Calabi-Yau variety, then~$W$ determines  
%arxiv_v3: 
	%a TQFT $\zz_{W,\textrm{gr}}^{\textrm{or}}$ as in~\eqref{eq:zzWgror}. 
	 an extended oriented TQFT $\Bordor \to {\LGgr}^{/\Z}$. 
\end{corollary}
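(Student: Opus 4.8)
The plan is to obtain Corollary~\ref{cor:WCY} as a direct specialisation of Proposition~\ref{prop:LGgrexori}, once the geometric Calabi--Yau hypothesis has been converted into the numerical condition on the central charge flagged in the text. First I would justify that equivalence: by the adjunction formula the canonical sheaf of a degree-$d$ hypersurface in the weighted projective space $\mathbb{P}(w_1,\dots,w_n)$ is $\mathcal{O}(d-\textstyle\sum_i w_i)$, so $\{W=0\}$ is Calabi--Yau exactly when $\sum_i w_i = d$. In the normalisation of $\LGgr$, in which $\deg W = 2$ and $|x_i| = 2w_i/d$, this reads $\sum_i |x_i| = 2$, and by~\eqref{eq:centralcharge} this is precisely $\tfrac{1}{3}c(W) = n-2$.

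Feeding this into Proposition~\ref{prop:LGgrexori}, the object $W$ determines $\zz_{W,\textrm{gr}}^{\textrm{or}}$ if and only if $[n-2]\{\tfrac{1}{3}c(W)\} \cong \text{Id}_{\LGgr(0,W)}$. Substituting the Calabi--Yau identity $\tfrac{1}{3}c(W)=n-2$ and using $\Sigma=[1]\{1\}$ together with $[2]=\text{Id}$ from~\eqref{eq:2shift}, the autoequivalence collapses to $[n-2]\{n-2\} = \Sigma^{n-2}$. Thus the entire statement reduces to the single categorical assertion $\Sigma^{n-2}\cong\text{Id}_{\LGgr(0,W)}$ on the suspension of the triangulated category of graded matrix factorisations, which is exactly what the surrounding text advertises as the ``$(n-2)$-Calabi--Yau condition''.

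The remaining task is to prove $\Sigma^{n-2}\cong\text{Id}$, and this is where I expect the real work to lie. The conceptual route is to identify $\LGgr(0,W)$ with the graded singularity category $D^{\mathrm{gr}}_{\mathrm{sg}}(\Bbbk[x]/(W))$ and to invoke Orlov's theorem, which for vanishing Gorenstein parameter $\sum_i w_i - d = 0$ yields a triangulated equivalence $\LGgr(0,W)\simeq D^b(\mathrm{Coh}\,Y)$ with $Y=\{W=0\}$; since exact equivalences intertwine suspensions, $\Sigma^{n-2}$ corresponds to the cohomological shift $[n-2]$ on $D^b(Y)$, and $\omega_Y\cong\mathcal{O}_Y$ identifies it with the Serre functor of~$Y$. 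The obstacle is that trivialising $\Sigma^{n-2}$ is not a formal consequence of this identification: the shift $[n-2]$ on $D^b(Y)$ is manifestly not the identity for $\dim Y>0$, so the required isomorphism must instead be assembled from the $\Q$-degree shift $\{\tfrac{1}{3}c(W)\}$, which the Calabi--Yau balance is calibrated to make compensate the cohomological shift $[n-2]$. I would therefore prove $\Sigma^{n-2}\cong\text{Id}$ by hand rather than geometrically, constructing an explicit isomorphism $I_W[n-2]\{\tfrac{1}{3}c(W)\}\cong I_W$ of graded matrix factorisations on the unit along the lines of the computation underlying Lemma~\ref{lem:SWIW}, tracking every $\Q$-degree produced by the adjunction shifts of~\eqref{eq:Xgradedadjoints}. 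Turning that degree bookkeeping into an actual $\delta$-closed homotopy equivalence is the crux I anticipate.
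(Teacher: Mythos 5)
Your first two steps coincide with the paper's. The Calabi--Yau hypothesis is converted into $\sum_i |x_i| = |W| = 2$ (the paper phrases this via $c_1(Y_W)=0$ rather than via the adjunction formula, but the computation is the same), giving $\tfrac{1}{3}c(W)=n-2$, and Proposition~\ref{prop:LGgrexori} then reduces the claim to $\Sigma^{n-2}\cong\text{Id}_{\LGgr(0,W)}$ with $\Sigma=[1]\{1\}$. The divergence --- and the gap --- lies in the third step. The paper finishes in one stroke with exactly the geometric argument you discard: since $Y_W=\{W=0\}$ is an $(n-2)$-dimensional Calabi--Yau variety, $\mathds{D}^{\textrm{b}}(\textrm{coh}(Y_W))$ is $(n-2)$-Calabi--Yau, and by \cite[Thm.\,3.11\,\&\,Rem.\,3.12]{o0503632} this category is triangle equivalent to $\LGgr(0,W)$, whence the paper reads off the required $(n-2)$-Calabi--Yau condition on $\LGgr(0,W)$. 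Your substitute --- ``constructing an explicit isomorphism $I_W[n-2]\{\tfrac{1}{3}c(W)\}\cong I_W$ \ldots\ tracking every $\Q$-degree'' --- is announced but never carried out; you yourself flag it as ``the crux I anticipate''. A blind proof that stops at the only nontrivial point is not a proof, so the proposal is incomplete precisely where the corollary has content.

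Moreover, the plan you sketch is inconsistent with your own stated reason for abandoning the paper's route. You correctly note that an exact equivalence intertwines suspensions, so $\Sigma^{n-2}$ corresponds under Orlov's equivalence to $[n-2]$ on $\mathds{D}^{\textrm{b}}(\textrm{coh}(Y_W))$, and that $[n-2]$ is not isomorphic to the identity when $\dim Y_W>0$. Taken at face value, this does not merely disqualify the geometric argument --- it proves $\Sigma^{n-2}\ncong\text{Id}_{\LGgr(0,W)}$ outright, so no amount of $\Q$-degree bookkeeping can produce the isomorphism you then set out to build. There is also nothing left to ``compensate'' with: the shift $\{\tfrac{1}{3}c(W)\}=\{n-2\}$ is already contained in $\Sigma^{n-2}=[n-2]\{n-2\}$. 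Indeed, a direct check confirms that your by-hand route is blocked: for $n$ odd a 2-isomorphism $I_W[n-2]\{n-2\}\to I_W$ would live in the vanishing odd part of $\Hom_{\LG(W,W)}(I_W,I_W[1])$, and for $n$ even it would be a homogeneous unit of nonzero $\Q$-degree in the positively graded local algebra $\Jac_W$, which does not exist. So you face a fork you cannot straddle: either you accept the transport of the Calabi--Yau property along the triangle equivalence --- which is exactly the paper's proof, and which you should then simply carry out rather than reject --- or you stand by your obstruction, in which case the object of your criticism is Corollary~\ref{cor:WCY} and its proof in the paper, not something an explicit homotopy equivalence can repair. As written, the proposal asserts the obstruction and simultaneously promises a construction that the obstruction forbids; that is a genuine gap, not a completed alternative argument.
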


\begin{proof}
We write $Y_W$ for the zero locus of~$W$ in weighted projective space. 
The variety~$Y_W$ is Calabi-Yau if and only if the condition $c_1(Y_W) = 0$ is satisfied by the first Chern class, which in our normalisation convention is equivalent to $\sum_{i=1}^n |x_i| = |W| = 2$. 
This implies $\tfrac{1}{3} c(W) = \sum_{i=1}^n (1 - |x_i|) = n-2$, and hence according to the proof of Proposition~\ref{prop:LGgrexori} we have that 
\be 
S_W \otimes (-) \cong [n-2]\{n-2\}
\ee 
is the $(n-2)$-fold product of the shift functor $\Sigma = [1]\{1\}$ of $\LGgr(0,W)$ with itself. 
%arxiv_v3: 
	%One way to see that the condition $\Sigma^{n-2} \cong \text{Id}_{\LGgr(0,W)}$ is satisfied is to note that since the Calabi-Yau variety~$Y_W$ is $(n-2)$-dimensional, the triangulated category $\mathds{D}^{\textrm{b}}(\textrm{coh}(Y_W))$ is $(n-2)$-Calabi-Yau. 
	%But by \cite[Thm.\,3.11\,\&\,Rem.\,3.12]{o0503632} this latter category is triangle equivalent to $\LGgr(0,W)$. 
	 Hence $S_W \cong I_W$ in ${\LGgr}^{/\Z}$. 
\end{proof}

\begin{remark}
\label{rem:gradedJacW2}
There is also an analogue of Theorem~\ref{thm:JacW2} for $\zz_{W,\textrm{gr}}^{\textrm{fr}}$: 
We already saw in Remark~\ref{rem:gradedJacW1} that $\zz_{W,\textrm{gr}}^{\textrm{fr}}$ sends the circle and pair-of-pants to $\Jac_W$ as a graded algebra. 
As in the proof of Theorem~\ref{thm:JacW2} we find that $\zz_{W,\textrm{gr}}^{\textrm{fr}}(\text{cup})(1)$ gives the unit $1 \in (\Jac_W)_0$ of degree~0 (because $\coev_{\ev_W}$ is of $\Q$-degree~0). 

Finally, $\zz_{W,\textrm{gr}}^{\textrm{fr}}(\text{cap})$ is a map 
%arxiv_v2: 
	(up to an invertible element, i.\,e.\ a constant $\zeta \in \Bbbk^\times$) 
from $\ev_W \otimes \ev_W^\dagger = \ev_W \otimes \ev_W^\vee[2n] \{ \tfrac{2}{3} c(W)\} \cong \Jac_W \{ \tfrac{2}{3} c(W)\}$ to~$\Bbbk$. 
This expresses the known fact that the residue trace map $\langle - \rangle_W$ is nonzero only on elements of degree $\tfrac{2}{3} c(W)$. 
For example for $W = x^{N+1}$ we have $\tfrac{2}{3} c(W) = 2(1 - \tfrac{2}{N+1})$ and $\langle x^j \rangle_{x^{N+1}} = \delta_{j,N-1}$, while 
$|x^{N-1}| = (N-1) \tfrac{2}{N+1} = 2 - \tfrac{4}{N+1}$. 
\end{remark}

\end{document}